\newcommand{\preprint}[1]{}
\newcommand{\hide}[1]{}
\numberwithin{equation}{section}
\theoremstyle{plain}
\newtheorem{thm}{Theorem}[section]
\newtheorem{prop}[thm]{Proposition}
\newtheorem{claim}[thm]{Claim}
\newtheorem{conj}[thm]{Conjecture}
\newtheorem{cor}[thm]{Corollary}
\newtheorem{lem}[thm]{Lemma}
\theoremstyle{definition}
\theoremstyle{remark}
\newtheorem{rem}[thm]{Remark}
\newcommand{\ra}{{\rightarrow}}
\newcommand{\F}{{\mathcal F}}
\newcommand{\M}{{\mathcal M}}
\newcommand{\PP}{{\mathbb P}}
\newcommand{\X}{{\mathcal X}}
\newcommand{\Integers}{{\mathbb Z}}
\newcommand{\RationalNumbers}{{\mathbb Q}}
\newcommand{\monrep}{{mon}}
\newcommand{\LongRightArrowOf}[1]{\stackrel{#1}{\longrightarrow}}
\newcommand{\StructureSheaf}[1]{{\mathcal O}_{#1}}
\newcommand{\rank}{{\rm rank}}
\newcommand{\SheafEnd}{{\mathcal E}nd}
\newcommand{\SheafExt}{{\mathcal E}xt}
\begin{document}
\title[]
{The Standard Conjectures for holomorphic symplectic varieties
deformation equivalent to Hilbert schemes of $K3$ surfaces}
\author{Fran\c{c}ois Charles and Eyal Markman}
\address{D\'epartement de Math\'ematiques et Applications, \'Ecole Normale Sup\'erieure, 45, rue d'Ulm, 75005 Paris, France}
\email{francois.charles@ens.fr}
\address{Department of Mathematics and Statistics, 
University of Massachusetts, Amherst, MA 01003, USA}
\email{markman@math.umass.edu}

\begin{abstract}
We prove the standard conjectures for complex
projective varieties that are deformations of the Hilbert scheme of points on a
K3 surface. The proof involves Verbitsky's theory of hyperholomorphic sheaves
and a study of the cohomology algebra of Hilbert schemes of K3 surfaces.
\end{abstract}

\maketitle

\tableofcontents

%
\section{Introduction}
An {\em irreducible holomorphic symplectic manifold} is a simply connected
compact K\"{a}hler manifold $X$, such that $H^0(X,\Omega^2_X)$ is generated
by an everywhere non-degenerate holomorphic two-form
(see \cite{beauville,huybrects-basic-results}).

Let $S$ be a smooth compact K\"{a}hler $K3$ surface
and $S^{[n]}$ the Hilbert scheme (or Douady space) of length $n$ zero
dimensional subschemes of $S$. Beauville proved in \cite{beauville} that
$S^{[n]}$ is an
irreducible holomorphic symplectic manifold of dimension $2n$. If $X$ is a
smooth compact K\"{a}hler manifold deformation equivalent to
$S^{[n]}$, for some $K3$ surface $S$, then we say that $X$ is of
{\em $K3^{[n]}$-type}. The variety $X$ is then an irreducible holomorphic
symplectic manifold.
The odd Betti numbers of $X$ vanish \cite{gottsche}.

The moduli space of K\"{a}hler manifolds of
$K3^{[n]}$-type is smooth and $21$-dimensional, if $n\geq 2$, while that of $K3$
surfaces is $20$-dimensional \cite{beauville}. It follows that if $S$ is a
$K3$ surface, a general K\"ahler deformation of $S^{[n]}$ is not of the form
$S'^{[n]}$ for a K3 surface $S'$. The same goes for projective deformations.
Indeed, a general projective deformation of $S^{[n]}$ has Picard number $1$,
whereas for a projective $S$, the Picard number of $S^{[n]}$ is at least $2$.


\bigskip

In this note, we prove the standard conjectures for projective varieties of
$K3^{[n]}$-type. Let us recall general facts about the standard conjectures.

In the paper \cite{Gr69} of 1968, Grothendieck states those conjectures
concerning the existence of some algebraic cycles on smooth projective algebraic
varieties over an
algebraically closed ground field. Here we work over $\mathbb C$.
The Lefschetz standard conjecture predicts the existence of algebraic
self-correspondences on a
given smooth projective variety $X$ of dimension $d$ that give an inverse to the
operations
$$H^{i}(X) \ra H^{2d-i}(X)$$
given by the cup-product $d-i$ times with a hyperplane section,
for all $i\leq d$. Above and throughout the rest of the paper, the notation
$H^i(X)$ stands for singular cohomology with rational coefficients.

Over the complex numbers, the Lefschetz standard conjecture implies all the
standard conjectures. If it holds for a variety $X$, it implies that numerical
and homological equivalence coincide for algebraic cycles on $X$, and that the
K\"{u}nneth components of the diagonal of $X\times X$ are algebraic. We refer to
\cite{Kl68} for a detailed discussion.

\bigskip

Though the motivic picture has tremendously developed since Grothendieck's
statement of the standard conjectures, very little progress has been made in
their direction. The Lefschetz standard conjecture is known for abelian
varieties,
and in degree $1$, where it reduces to the Hodge conjecture for divisors.
The Lefschetz standard conjecture is also known for varieties $X$, for which
$H^*(X)$ is isomorphic to the Chow ring $A^*(X)$,
see \cite{Kl94}. Varieties with the latter property include flag varieties, and
smooth projective moduli spaces of sheaves on rational Poisson surfaces
\cite{ES,markman-integral-generators}.

In the paper \cite{arapura}, Arapura proves that the Lefschetz standard
conjecture holds for
uniruled threefolds, unirational fourfolds,
the moduli space of stable vector bundles over a smooth projective curve, and
for
the Hilbert scheme
$S^{[n]}$ of every smooth projective surface (\cite{arapura}, Corollaries 4.3,
7.2 and 7.5).
He also proves that if $S$ is a $K3$ or abelian surface, $H$ an ample
line-bundle on $S$,
and $\M$ a smooth and
compact moduli space of Gieseker-Maruyama-Simpson $H$-stable sheaves on $S$,
then the Lefschetz standard conjecture holds for $\M$
(\cite{arapura}, Corollary 7.9). Those results are obtained by showing that the
motive of those varieties is very close, in a certain sense, to that of a curve
or a surface. Aside from those examples and ones obtained by specific constructions from them (e.g. hyperplane
sections, products, projective bundles, etc.), very few cases of the Lefschetz standard conjecture seem to be known.

\medskip
The main result of this note is the following statement.
\begin{thm}
\label{thm-lefschetz}
The Lefschetz standard conjecture holds for every smooth projective variety
of $K3^{[n]}$-type.
\end{thm}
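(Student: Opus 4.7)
The plan is to combine Arapura's theorem, which gives the Lefschetz standard conjecture for $S^{[n]}$ when $S$ is a projective $K3$ surface, with Verbitsky's theory of hyperholomorphic sheaves, to transport algebraicity from Hilbert schemes to arbitrary projective deformations. The key obstruction, emphasized in the introduction, is that a projective $X$ of $K3^{[n]}$-type is in general not a projective deformation of any $S^{[n]}$, since the moduli of $K3^{[n]}$-type manifolds is $21$-dimensional while the Hilbert locus is $20$-dimensional. Nevertheless $X$ is always a K\"ahler deformation of some $S^{[n]}$, so algebraicity must be transported along families passing through non-algebraic K\"ahler fibers --- exactly the setting in which Verbitsky's deformation theorem for hyperholomorphic sheaves operates.

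First I would reduce the problem to realizing specific operators by algebraic cycles on $X \times X$. Fix an ample class $h$ on $X$. By the Looijenga--Lunts--Verbitsky theory, cup product with $h$ extends to an $\LieAlg{sl}_2$-triple $(L_h, H, \Lambda_h)$ on $H^*(X)$, and it suffices to realize $\Lambda_h$ by an algebraic correspondence: inverses of the Lefschetz operators $L_h^{d-i}$ are then obtained by iterated applications of $\Lambda_h$ and $L_h$. The operator $\Lambda_h$ lives inside the LLV Lie algebra $\LieAlg{g}(X)$, whose structure is canonical and described uniformly across deformations in terms of the Beauville--Bogomolov form. In particular, the cohomology class of $\Lambda_h$ on $X \times X$ is a parallel section in any smooth K\"ahler family containing $X$, and automatically remains Hodge on every fiber.

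The core step is then to produce, on $S^{[n]}$, enough hyperholomorphic sheaves or complexes --- built for instance from the universal ideal sheaf on $S \times S^{[n]}$ and the Nakajima creation operators --- whose Chern characters on $S^{[n]} \times S^{[n]}$ collectively generate the required class representing $\Lambda_h$. By Verbitsky's theorem each such sheaf deforms, possibly as a twisted sheaf, to every small K\"ahler deformation of $S^{[n]}$, and its Chern characters remain algebraic on every projective fiber; a deformation path from $X$ to some $S^{[n]}$ then yields the required algebraic correspondence on $X$. I expect the chief difficulty to lie precisely in this last step, since the hyperholomorphic condition forces $c_1$ and $c_2$ to be of type $(1,1)$ with respect to every complex structure in the twistor family --- a strong Hodge-theoretic constraint --- so one must carry out a careful analysis of the cohomology algebra $H^*(S^{[n]})$ to verify that such sheaves generate a large enough subalgebra of algebraic classes, and in particular one containing the canonical class that deforms to $\Lambda_h$ on each $K3^{[n]}$-type fiber.
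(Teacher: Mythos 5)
Your overall instinct---transport algebraicity across non-projective deformations by deforming a (possibly twisted) hyperholomorphic sheaf and using that its characteristic classes stay algebraic on every projective fiber---is indeed the engine of the paper, in the form of Theorem \ref{thm-deformability} applied to the relative extension sheaf $E^1$ on $\M_H(v)\times\M_H(v)$. But your central step fails as stated. The operator $\Lambda_h$ is \emph{not} a parallel section of the local system $R^*q_*\RationalNumbers$ on $\X\times_C\X$: it depends on the ample class $h$, which does not extend flatly in the family (nearby fibers of a twistor-type deformation are not projective at all, and on other projective fibers the flat transport of $h$ is in general not even of type $(1,1)$). Only the full Looijenga--Lunts--Verbitsky Lie algebra is deformation invariant, not the individual element $\Lambda_h$, so there is no ``canonical class that deforms to $\Lambda_h$ on each fiber'' whose algebraicity one could propagate. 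Relatedly, Arapura's theorem for $S^{[n]}$ is of no direct use here: the cycles it produces are built motivically from the auxiliary surface $S$ and carry no flatness or hyperholomorphicity property, so their algebraicity cannot be carried along a deformation leaving the Hilbert-scheme locus; the paper does not use Arapura's result.

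The missing idea is the reduction that makes the hyperholomorphic input sufficient. The paper never realizes $\Lambda_h$ or the Lefschetz inverses directly. It invokes Charles's criterion (Theorem \ref{Lef-criterion}) in the form of Corollary \ref{recurrence}: assuming the conjecture in degrees $<2i$, it suffices to exhibit \emph{one} algebraic correspondence on $X\times X$ whose action on $H^{4n-2i}(X)$ surjects onto $H^{2i}(X)$ modulo algebraic classes and the subring generated by lower-degree classes; induction on $i$ then yields the full conjecture. The only deformable class available is (essentially) $\kappa(E^1)$ multiplied by Todd factors, and the real technical content is proving that it has this surjectivity property. This is done on a suitable moduli space $\M_H(v)$, not on $S^{[n]}$: one compares the deformable class with the Fourier--Mukai convolution class $ch([E])$ (Corollary \ref{cor-comparison-between-kappa-and-f}), uses Markman's theorem that the classes $ch_i(\phi(K(S)))$ generate $H^*(\M)$ (Theorem \ref{thm-generators}), and then a monodromy-equivariance argument (Claim \ref{claim-image-of-f-i}), where irreducibility of $w^\perp$ under $O^+\Lambda(S)_v$ replaces the unavailable injectivity of $\eta_i$, to conclude that the image of $f_i$ equals the image of $\eta_i$, hence surjects onto $\overline H^{2i}(\M)$. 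Your proposal instead demands that the deformed Chern classes contain a specific operator ($\Lambda_h$), which is both ill-posed (as above) and far stronger than what the deformable classes can be shown to give; without an inductive criterion of the type of Corollary \ref{recurrence}, the final step of your sketch is exactly the unproved content.
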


Since the Lefschetz standard conjecture is the strongest standard conjecture in
characteristic zero, we get the following corollary.
\begin{cor}
The standard conjectures hold for any smooth projective variety
of $K3^{[n]}$-type.
\end{cor}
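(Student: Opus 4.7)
The Corollary follows immediately from Theorem 1.1, since over $\mathbb{C}$ the Lefschetz standard conjecture is the strongest of the standard conjectures, so I will focus on the plan for Theorem 1.1. Fix $X$ projective of $K3^{[n]}$-type with $\dim X = 2n$. It suffices to produce, for each $i \leq 2n$, an algebraic self-correspondence on $X\times X$ inverting the Lefschetz operator $L^{2n-i}\colon H^i(X)\to H^{4n-i}(X)$. Using the Looijenga-Lunts-Verbitsky Lie algebra $\mathfrak{g}(X)\cong\mathfrak{so}(4, b_2(X)-2)$ acting on $H^*(X)$, one first decomposes $H^*(X)$ into $\mathfrak{g}(X)$-isotypic components and reduces the problem to inverting $L^{2n-i}$ on each component separately. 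The overall plan is then to transport, via a hyperk\"ahler deformation, the algebraic correspondences that Arapura's theorem provides on $S^{[n]}$ to the general projective $X$ of $K3^{[n]}$-type; the mechanism making such a transport possible is Verbitsky's theory of hyperholomorphic sheaves.

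The first substantial step is a detailed study of the cohomology ring $H^*(S^{[n]})$ for a projective K3 surface $S$. Using the Nakajima/Lehn-Sorger description of $H^*(S^{[n]})$ together with the tautological sheaves associated to the universal subscheme $\mathcal{Z}\subset S\times S^{[n]}$, the goal is to exhibit explicit algebraic classes on $S^{[n]}\times S^{[n]}$ that realise the Lefschetz inverses provided by Arapura, and to write them as polynomials in Chern classes of coherent sheaves that are \emph{hyperholomorphic} on $S^{[n]}\times S^{[n]}$ in Verbitsky's sense.

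Since $X$ is deformation equivalent to $S^{[n]}$ through a smooth (but generally non-projective) K\"ahler family $\pi\colon\mathcal{X}\to B$, I would next apply Verbitsky's deformation theory for hyperholomorphic sheaves to the fibred self-product $\mathcal{X}\times_B \mathcal{X}\to B$: the hyperholomorphic sheaves constructed on the fibre $S^{[n]}\times S^{[n]}$ should extend, possibly as twisted coherent sheaves, to a neighbourhood of that fibre in $B$ containing the point parameterising $X\times X$, and their Chern classes remain of Hodge type $(p,p)$ on every fibre. Parallel transport along $B$ thus produces Hodge classes on $X\times X$ which specialise back to Arapura's correspondences on $S^{[n]}\times S^{[n]}$.

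The main obstacle is the last step, where one must promote these deformed Hodge classes on $X\times X$ to honest algebraic classes, and then verify that the resulting correspondences actually invert the Lefschetz operators on every $\mathfrak{g}(X)$-isotypic component of $H^*(X)$. Algebraicity should follow from the realisation of the extension as a genuine family of (possibly twisted) coherent sheaves, since fibrewise Chern classes of coherent sheaves are algebraic by construction; the delicate points are controlling the Brauer obstruction over the non-projective part of the deformation and globalising Verbitsky's construction from an infinitesimal to an actual parallel-transport statement. The Lefschetz-inversion property, by contrast, should reduce to the same statement at the central fibre $S^{[n]}$ via the compatibility of parallel transport with the LLV action, and there it is supplied directly by Arapura's theorem.
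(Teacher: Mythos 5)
Your deduction of the Corollary itself is exactly the paper's: over $\mathbb{C}$ the Lefschetz standard conjecture implies all the standard conjectures, so the Corollary is immediate from Theorem \ref{thm-lefschetz}. The problem is the plan you then sketch for Theorem \ref{thm-lefschetz}, which is not the paper's route and has gaps at precisely the points the paper's argument is built to avoid. First, your key step is to write Arapura's Lefschetz-inverting correspondences on $S^{[n]}\times S^{[n]}$ as polynomials in Chern classes of hyperholomorphic sheaves; nothing of the sort is known, and the paper never deforms Arapura's cycles at all. Instead it deforms a single specific class, $\kappa(E^1)$, where $E^1=\SheafExt^1_{\pi_{13}}\left(\pi_{12}^*\F,\pi_{23}^*\F\right)$ on $\M\times\M$ for a suitable moduli space $\M=\M_H(v)$ of sheaves on a K3; the deformability of this class (as a twisted sheaf, via Verbitsky's theory) is Markman's result quoted as Theorem \ref{thm-deformability}. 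Second, your closing claim that the inversion property ``reduces to the central fibre via parallel transport'' fails as stated: inverting $\cup\,\xi^{2n-i}$ is not a flat condition along the family, because the ample class is not a flat section (most fibres of the connecting family are not even projective), so a parallel-transported correspondence has no reason to invert the Lefschetz operator of the new polarization.

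The paper circumvents exactly this difficulty with Theorem \ref{Lef-criterion} and Corollary \ref{recurrence}: arguing by induction on the degree, it suffices to produce an algebraic correspondence whose image surjects onto $H^{2i}(X)$ modulo the subring generated by lower-degree classes together with the algebraic class $c_i(TX)$, and \emph{surjectivity} of a morphism of local systems is a condition that can be transported along the family and checked at the special fibre $\M$. At $\M$ the surjectivity is established not by Arapura's theorem but by Markman's generation theorem (the classes $ch_i(\phi(K(S)))$ generate $H^*(\M)$), the self-adjoint factorization $f=\eta\circ\eta^\dagger$, and monodromy equivariance (irreducibility of $w^\perp$ under $O^+\Lambda(S)_v$) giving ${\rm Im}(f_i)={\rm Im}(\eta_i)$. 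The Looijenga--Lunts--Verbitsky Lie algebra decomposition you propose plays no role, and without a replacement for the two gaps above your outline does not yield Theorem \ref{thm-lefschetz}, hence does not establish the Corollary beyond its (correct) first sentence.
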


Note that by the remarks above, Theorem \ref{thm-lefschetz} does not seem to follow from Arapura's
results, as a general variety of $K3^{[n]}$-type is not a moduli
space of sheaves on any K3 surface.

\bigskip

Theorem \ref{thm-lefschetz} is proven in section \ref{sec-proof-of-main-thm}.
The degree $2$ case of the  Lefschetz standard conjecture, for projective
varieties of
$K3^{[n]}$-type, has already been proven in \cite{markman-2010} as a consequence
of results of \cite{charles}. Section 2 gives general results on the Lefschetz standard conjecture. Sections 3 to 5
introduce the algebraic cycles we need for the proof, while sections 6 and 7 contain results on the cohomology
algebra of the Hilbert scheme of K3 surfaces.
%

\section{Preliminary results on the Lefschetz standard conjecture}

Let $X$ be a smooth projective
variety of dimension $d$. Let $\xi \in H^2(X)$ be the cohomology class of a
hyperplane section of $X$. According to the hard Lefschetz theorem, for all
$i\in\{0, \ldots, d\}$, cup-product with $\xi^{d-i}$ induces an isomorphism
$$L^{[d-i]}:=\cup \xi^{d-i} : H^{i}(X)\ra H^{2d-i}(X).$$
The Lefschetz standard conjecture was first stated in \cite{Gr69}, conjecture
$B(X)$. It is the following.

\begin{conj}
Let $X$ and $\xi$ be as above. Then for all $i\in\{0, \ldots, d\}$, there exists
an algebraic cycle $Z$ of codimension $i$ in the product $X\times X$ such that
the correspondence
$$[Z]_* : H^{2d-i}(X)\ra H^{i}(X)$$
is the inverse of $\cup \xi^{d-i}$.
\end{conj}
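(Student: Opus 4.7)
The plan is to prove Theorem \ref{thm-lefschetz} by transporting algebraic Lefschetz inverses from Hilbert schemes $S^{[n]}$, where the Lefschetz standard conjecture is known by Arapura (\cite{arapura}, Corollary 7.5), to an arbitrary projective variety of $K3^{[n]}$-type. The immediate obstruction is a form of the variational Hodge conjecture: algebraic cycles on $S^{[n]} \times S^{[n]}$ cannot a priori be expected to remain algebraic under smooth deformations. The tool that bypasses this is Verbitsky's theory of hyperholomorphic sheaves, whose Chern characters stay of type $(p,p)$ on every fiber of the twistor family of a hyperk\"ahler manifold. Combined with the degree-$2$ case of the Lefschetz standard conjecture on varieties of $K3^{[n]}$-type, already established in \cite{markman-2010} via \cite{charles}, such Hodge classes become algebraic on every projective deformation. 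The guiding idea is therefore to express the needed Lefschetz inverses on $S^{[n]}$ as polynomials in Chern characters of hyperholomorphic sheaves, together with natural geometric cycles, so that algebraicity persists under deformation.

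Concretely, I would first assemble in sections 3--5 a stock of algebraic self-correspondences on $S^{[n]} \times S^{[n]}$ --- for instance, built from the incidence subscheme, the exceptional divisor of the Hilbert--Chow morphism, and Chern characters of tautological or universal sheaves on appropriate moduli spaces --- that are individually of hyperholomorphic origin in the sense above. Next, in sections 6--7, I would use the Nakajima/Lehn description of $H^{*}(S^{[n]})$ together with the LLV (Looijenga--Lunts--Verbitsky) $\mathfrak{sl}_2$-triple $(L_\xi, H, \Lambda_\xi)$ attached to a K\"ahler class $\xi$ to argue that the abstract Lefschetz inverse $\Lambda_\xi$ can be realized, as a class in $H^{*}(S^{[n]} \times S^{[n]})$, by a polynomial combination of those algebraic correspondences. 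Given such a presentation, parallel transport along a smooth family connecting $S^{[n]}$ to an arbitrary projective $X$ of $K3^{[n]}$-type produces a class in $H^{*}(X \times X)$ that is both algebraic (by hyperholomorphicity and the degree-$2$ case) and still inverts the Lefschetz operator for the parallel-transported ample class, which is itself algebraic in degree $2$.

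The main obstacle, I expect, is the cohomological step: demonstrating that the subalgebra of $H^{*}(S^{[n]} \times S^{[n]})$ generated by algebraic classes of hyperholomorphic origin is genuinely large enough to contain $\Lambda_\xi$ for every K\"ahler class $\xi$. This comes down to a concrete analysis of the interaction between the cup-product structure of $H^{*}(S^{[n]})$, the LLV algebra action, and Markman's monodromy-invariant classes. One must identify the geometric, algebraic provenance of enough LLV operators to assemble an inverse to each $L^{d-i}$, and this seems to require delicate bookkeeping in the Nakajima/Lehn presentation of the cohomology ring. By contrast, once the hyperholomorphic property is in place, the deformation step should follow mechanically from Verbitsky's compatibility of Chern characters of hyperholomorphic sheaves with parallel transport along the twistor family.
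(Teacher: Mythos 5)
Your overall flavor is right---deform an algebraic correspondence from a model fiber to an arbitrary projective $X$ of $K3^{[n]}$-type using hyperholomorphicity---but two of your load-bearing steps have genuine gaps, and the paper's actual mechanism is designed precisely to avoid them. First, your ``stock'' of correspondences is not deformable: cycles built from the incidence subscheme, the exceptional divisor of the Hilbert--Chow morphism, or tautological sheaves are not of hyperholomorphic origin, and their classes do not even remain of Hodge type along a general deformation (this is exactly the variational obstruction you name). The only class known to deform is the very particular one of Theorem \ref{thm-deformability}: $\kappa(E^1)$ for the relative extension sheaf $E^1=\SheafExt^1_{\pi_{13}}(\pi_{12}^*\F,\pi_{23}^*\F)$ on $\M_H(v)\times\M_H(v)$, whose deformability as a twisted sheaf (equivalently of $\SheafEnd(E^1)$, which is the hyperholomorphic object) is a hard theorem of Markman, not a mechanical consequence of Verbitsky's theory. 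Second, your cohomological step asks far too much: realizing the LLV operator $\Lambda_\xi$ (for every K\"ahler $\xi$, since parallel transport does not preserve ampleness or even $(1,1)$-type) as a polynomial in deformable algebraic correspondences amounts to algebraicity of the Lefschetz inverses themselves, which is essentially the conjecture you are trying to prove and is not established by the available inputs.

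The missing idea that makes the paper work is the reduction of Theorem \ref{Lef-criterion} and Corollary \ref{recurrence}: once $X$ satisfies the conjecture in degrees up to $i-1$, one never needs an algebraic \emph{inverse} in degree $i$; it suffices to exhibit \emph{some} algebraic correspondence whose image surjects onto $H^{i}(X)$ modulo the subring generated by lower-degree classes together with algebraic classes (here $c_i(TX)$). The paper then runs an induction on even degrees, feeding in only the single deformable cycle coming from $\kappa(E^1)$; the whole cohomological burden is to prove surjectivity of the induced map onto $\overline{H}^{2i}(\M)$ at the moduli-space fiber (Proposition \ref{prop-surjectivity} and Corollary \ref{cor-comparison-between-kappa-and-f}). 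That surjectivity is not ``LLV bookkeeping'' but a combination of Markman's generation theorem for $H^*(\M)$ by $ch_i(\phi(K(S)))$ (Theorem \ref{thm-generators}) with a monodromy-equivariance and irreducibility argument (Claim \ref{claim-image-of-f-i}) showing that the self-adjoint correspondence $f_i=\eta_i\circ\eta_i^\dagger$ has the same image as $\eta_i$, so no information is lost in passing from the Fourier--Mukai-type map $\eta$ to the self-correspondence supported on $\M\times\M$. If you want to salvage your proposal, replace the attempt to transport $\Lambda_\xi$ by this criterion-plus-surjectivity scheme, and replace your list of cycles by the one class that actually deforms.
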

If this conjecture holds for some specific $i$ on $X$, we will say that
{\em the Lefschetz conjecture holds in degree $i$ for the variety $X$.}

We will derive Theorem \ref{thm-lefschetz} as a consequence of Theorem
\ref{thm-deformability} and Corollary \ref{cor-comparison-between-kappa-and-f} below.
In this section, we prove some general results we will need. The reader can consult \cite{arapura}, sections 1 and 4 for
related arguments, and \cite{andre} for a more general use of polarizations and 
semi-simplicity. Let us first state an easy lemma.

\begin{lem}\label{devissage}
Let $X$ be a smooth projective variety of dimension $d$. Let $i\leq d$ be an
integer.
\begin{enumerate}
\item Assume $i=2j$ is even, and let $\alpha\in H^{2j}(X)$ be the cohomology
class of a codimension $j$ algebraic cycle in $X$. Then there exists a cycle $Z$
of codimension $i=2j$ in $X\times X$ such that the image of the correspondence
$$[Z]_* : H^{2d-2j}(X)\ra H^{2j}(X)$$
contains $\alpha$.
\item Assume that $X$ satisfies the Lefschetz standard conjecture in degrees up
to $i-1$. Then $X\times X$ satisfies the Lefschetz standard conjecture in degree
up to $i-1$.

    Let $j$ and $k$ be two positive integers with $i=j+k$. Then there exists a
cycle $Z$ of codimension $i$ in $(X\times X)\times X$ such that the image of the
correspondence
$$[Z]_* : H^{4d-i}(X\times X)\ra H^{i}(X)$$
contains the image of $H^j(X)\otimes H^k(X)$ in $H^{j+k}(X)=H^i(X)$ by
cup-product.
\end{enumerate}
\end{lem}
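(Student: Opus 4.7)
For Part (1), the plan is to take $Z := A \times A$, where $A$ is any codimension-$j$ cycle representing $\alpha$; this has codimension $2j = i$, and using $[A \times A] = p_1^*\alpha \cup p_2^*\alpha$ together with the projection formula one gets, for $\beta \in H^{2d-2j}(X)$,
$$[Z]_*(\beta) \;=\; \Bigl(\textstyle\int_X \alpha \cup \beta\Bigr)\cdot\alpha,$$
so Poincar\'e duality places $\alpha$ in the image (the case $\alpha = 0$ being trivial).

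For the first assertion of Part (2), the plan is to exploit the K\"unneth decomposition $H^m(X\times X) = \bigoplus_{p+q=m} H^p(X)\otimes H^q(X)$ together with the binomial expansion of $\xi_{X\times X}^{2d-m}$, where $\xi_{X\times X} = p_1^*\xi + p_2^*\xi$. This expresses the various components of $L^{[2d-m]}_{X\times X}$ between K\"unneth summands as polynomials in $L$ on each factor. When $m \leq i-1$, every $p, q$ is at most $i-1$, so Lefschetz on $X$ supplies algebraic cycles $\Gamma_p, \Gamma_q \subset X\times X$ of codimensions $p, q$ inverting $L^{[d-p]}, L^{[d-q]}$. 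A standard argument — first construct algebraic K\"unneth projectors in degrees $\leq i-1$ from the $\Gamma_p$'s and powers of $L$, then combine the exterior products $\Gamma_p \times \Gamma_q$ with appropriate rational coefficients and shuffle them onto $(X\times X)^2$ — will produce an algebraic cycle of codimension $m$ on $(X\times X)^2$ inverting $L^{[2d-m]}_{X\times X}$.

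For the second assertion, write $i = j+k$ with $j, k \geq 1$ and choose algebraic $\Gamma_j, \Gamma_k \subset X\times X$ of codimensions $j, k$ inverting $L^{[d-j]}, L^{[d-k]}$. Let $W \subset (X\times X)^2$ be the codimension $i$ cycle obtained from $\Gamma_j \times \Gamma_k \subset X^4$ by swapping the middle two factors; then $[W]_*$ restricts to a surjection $H^{2d-j}(X)\otimes H^{2d-k}(X) \twoheadrightarrow H^j(X)\otimes H^k(X)$. Now let $\delta \subset X\times X\times X$ be the small diagonal, of codimension $2d$, whose associated correspondence $H^i(X\times X) \to H^i(X)$ is the cup product. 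The cycle-theoretic composition $Z := \delta \circ W \subset (X\times X) \times X$ will have codimension $i + 2d - 2d = i$, and
$$[Z]_* = [\delta]_* \circ [W]_* : H^{4d-i}(X\times X) \longrightarrow H^i(X)$$
will have image containing $[\delta]_*\bigl(H^j(X)\otimes H^k(X)\bigr)$, precisely the cup-product image.

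The main subtlety is the first assertion of (2): although the binomial formula makes the summand-by-summand structure of $L^{[2d-m]}_{X\times X}$ transparent, this operator is not block-diagonal with respect to the K\"unneth decomposition, so combining the $\Gamma_p$'s into a single correspondence inverting the full operator requires first extracting algebraic K\"unneth projectors in degrees $\leq i-1$ from the Lefschetz hypothesis — a well-known but nontrivial consequence of it. Part (1) and the second assertion of (2) are then essentially direct cycle manipulations.
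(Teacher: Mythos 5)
Part (1) and the second assertion of part (2) of your proposal are correct and essentially the paper's own constructions: $Z=A\times A$ for (1), and your $Z=\delta\circ W$ acts by $\alpha\otimes\beta\mapsto[\Gamma_j]_*\alpha\cup[\Gamma_k]_*\beta$, which is the same correspondence the paper writes directly as the intersection $p_{13}^*T\cdot p_{23}^*T'$ on $X\times X\times X$.

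The gap is in the first assertion of (2), exactly at the step you call ``a standard argument''. The algebraic K\"unneth projectors (Kleiman's Lemma 2.4, \cite{Kl68}) are indeed available in degrees $\leq i-1$, but they do not do what you need: after inserting them, each exterior product $\Gamma_p\times\Gamma_q$ kills every K\"unneth summand $H^{p'}(X)\otimes H^{q'}(X)$ of $H^{4d-m}(X\times X)$ except $(p',q')=(2d-p,2d-q)$, which it sends to $H^p(X)\otimes H^q(X)$; hence any rational-coefficient combination of these, pre- or post-composed with projectors, is K\"unneth-diagonal. The inverse of $L^{[2d-m]}_{X\times X}$ is not. Already for $X$ a surface and $m=2$: $\xi_{X\times X}^2(1\otimes\beta)=\xi^2\otimes\beta+2\,\xi\otimes\xi\beta$ and $\xi_{X\times X}^2(\gamma\otimes 1)=2\,\xi\gamma\otimes\xi+\gamma\otimes\xi^2$, so a diagonal left inverse $M$ would have to satisfy both $M(\xi\otimes\xi^2)=0$ (take $\beta=\xi$ in the first identity) and $M(\xi\otimes\xi^2)=\xi\otimes 1$ (take $\gamma=\xi$ in the second), a contradiction. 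So no combination of the $\Gamma_p\times\Gamma_q$ literally inverts the product Lefschetz operator, and producing its off-diagonal blocks algebraically is precisely the nontrivial point your sketch asserts rather than proves. Two ways to close the gap: (a) the paper's route --- do not aim for the exact inverse; use the algebraic projectors $\pi^{2d-p}$ to normalize the given cycles $Z_p$ so that $[Z_p]_*$ vanishes off $H^{2d-p}(X)$, check that $\sum_{p+q=m}Z_p\times Z_q$ induces an \emph{isomorphism} $H^{4d-m}(X\times X)\to H^{m}(X\times X)$, and conclude degree $m$ for $X\times X$ by induction from the criterion of Theorem \ref{Lef-criterion} (Charles, Prop.\ 8; equivalently Kleiman's arguments), which only requires some surjective algebraic correspondence together with the lower-degree cases; or (b) keep your diagonal candidate $M_0=\sum_{p+q=m}\binom{2d-m}{d-p}^{-1}\,\Gamma'_p\times\Gamma'_q$, observe that $M_0\circ L^{[2d-m]}_{X\times X}$ is an algebraic automorphism of $H^m(X\times X)$, and use Cayley--Hamilton together with the algebraic projector $\pi^m_{X\times X}=\sum_{p+q=m}\pi^p\otimes\pi^q$ to make its inverse, and hence the exact inverse of the Lefschetz power, algebraic. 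Either way, the essential ingredient is the passage from ``some algebraic isomorphism in degree $m$'' to the conjecture in degree $m$; the K\"unneth projectors alone do not supply it.
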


\begin{proof}
Let $\alpha\in H^{2j}(X)$ be the cohomology class of a codimension $j$ algebraic
cycle $T$ in $X$. Let $Z$ be the codimension $i=2j$ algebraic cycle $T\times T$
in $X\times X$. Since the image in $H^i(X)\otimes H^i(X)$ of the cohomology
class of $Z$ in $H^{2i}(X\times X)$ is $\alpha\otimes \alpha$, the image of the
correspondence
$$[Z]_* : H^{2d-i}(X)\ra H^{i}(X)$$
is the line generated by $\alpha$. This proves $(1)$.

 \bigskip

Let us prove the first part of $(2)$. We repeat some of Kleiman's arguments in \cite{Kl68}.
Assume that $X$ satisfies the Lefschetz standard conjecture
in degree up to $i-1$. We want to prove that $X\times X$ satisfies the Lefschetz
standard conjecture in degree up to $i-1$. By induction, we only have to prove
that $X\times X$ satisfies the Lefschetz standard conjecture in degree
$i-1$.

For any $j$ between $0$ and $i-1$, there exists a codimension
$j$ algebraic cycle $Z_j$ in $X\times X$ such that the correspondence
$$[Z_j]_* : H^{2d-j}(X)\ra H^j(X)$$
is an isomorphism. For $k$ between $0$ and $2d$, 
let $\pi^k\in H^{2d-k}(X)\otimes H^k(X)
\subset H^{2d}(X\times X)$
be the $k$-th K\"unneth component of the diagonal. By \cite{Kl68}, Lemma 2.4,
the assumption on $X$ implies that the elements $\pi^0, \ldots, \pi^{i-1}, \pi^{2d-i+1},
\ldots, \pi^{2d}$ are algebraic. Identifying the $\pi^j$ with the correspondence they induce,
this implies that for all $j$ between $0$ and $i-1$, the projections
$$\pi^j : H^*(X)\ra H^j(X)\hookrightarrow H^*(X)$$
and
$$\pi^{2d-j} : H^*(X)\ra H^{2d-j}(X)\hookrightarrow H^*(X)$$
are given by algebraic correspondences. Replacing the correspondence $[Z_j]_*$ by
$[Z_j]_* \circ \pi_{2d-j}$, which is still algebraic, we can thus assume that the morphism
$$[Z_j]_* : H^{2d-k}(X)\ra H^{2j-k}(X)$$
induced by $[Z_j]$ is zero unless $k=j$.

Now consider the codimension $i-1$ cycle $Z$ in $(X\times X)\times(X\times X)$ defined by
$$Z=\sum_{j=0}^{i-1} Z_j \times Z_{i-1-j}.$$
We claim that the correspondence
$$[Z]_* : H^{4d-i+1}(X\times X)\ra H^{i-1}(X\times X)$$
is an isomorphism.

Fix $j$ between $0$ and $i-1$. The hypothesis on the cycles $Z_j$ imply that the correspondence
$$[Z_j\times Z_{i-1-j}]_* : H^{4d-i+1}(X\times X)\ra H^{i-1}(X\times X)$$
maps the subspace $H^{2d-k}(X)\otimes H^{2d-i+1+k}(X)$ of $H^{4d-i+1}(X\times X)$
to zero unless $k=j$, and it maps $H^{2d-j}(X)\otimes H^{2d-i+1+j}(X)$ isomorphically
onto $H^{j}(X)\otimes H^{i-1-j}(X)$. The claim follows, as does the first part of $(2)$.

\bigskip

For the second statement, let $j$ and $k$ be as in the hypothesis. Since $j$ (resp. $k$) is smaller than or equal to
$i-1$, $X$ satisfies the Lefschetz standard conjecture in degree $j$ (resp.
$k$). As a consequence, there exists a cycle $T$ (resp. $T'$) in $X\times X$
such that the morphism
$$[T]_* : H^{2d-j}(X)\ra H^{j}(X)$$
(resp. $[T']_* : H^{2d-k}(X)\ra H^{k}(X)$) is an isomorphism. Consider now the
projections $p_{13}$ and $p_{23}$ from $X\times X\times X$ to $X\times X$
forgetting the second and first factor respectively, and let $Z$ in
$CH^{i}(X\times X\times X)$ be the intersection of $p_{13}^*T$ and $p_{23}^*T'$.
 Since the cohomology class of $Z$ is just the cup-product of that of
$p_{13}^*T$ and $p_{23}^*T'$, it follows that the image of the correspondence
$$[Z]_* : H^{4d-i}(X\times X)\ra H^{i}(X)$$
contains the image of $H^j(X)\otimes H^k(X)$ in $H^{j+k}(X)=H^i(X)$ by
cup-product.
\end{proof}

The following result appears in \cite{charles}, Proposition 8. Using it with the
previous lemma, it will allow us to finish the proof of Theorem
\ref{thm-lefschetz}.

\begin{thm}\label{Lef-criterion}
Let $X$ be a smooth projective variety of dimension $d$, and let $i\leq d$ be an
integer. Then the Lefschetz conjecture is true in degree $i$ for $X$ if and only
if there exists a disjoint union $S$ of smooth projective varieties of dimension
$l\geq i$ satisfying the Lefschetz conjecture in degrees up to $i-2$ and a
codimension $i$ cycle $Z$ in $X\times S$ such that the morphism
$$[Z]_* : H^{2l-i}(S)\ra H^i(X)$$
induced by the correspondence $Z$ is surjective.
\end{thm}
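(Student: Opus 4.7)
The substantive content is the ``if'' direction (the ``only if'' is immediate, e.g.\ by taking $S=X$ together with the cycle supplied by the Lefschetz conjecture in degree~$i$ on $X$, possibly after an inductive setup to secure Lefschetz up to degree $i-2$ on $X$). So let $\phi:=[Z]_*:H^{2l-i}(S)\twoheadrightarrow H^i(X)$ be the given algebraic surjection. The goal is to exhibit an algebraic two-sided inverse to $L_X^{d-i}:H^i(X)\to H^{2d-i}(X)$.

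First, I would construct an algebraic ``adjoint'' of $\phi$. The transposed correspondence yields an algebraic $[Z^t]_*:H^{2d-i}(X)\to H^i(S)$, which is Poincar\'e-dual to $\phi$ and hence injective. Composing with the algebraic hard-Lefschetz operators $L_X^{d-i}$ and $L_S^{l-i}$ (cup-product with powers of the hyperplane classes) produces a first candidate $\psi:=L_S^{l-i}\circ[Z^t]_*\circ L_X^{d-i}:H^i(X)\to H^{2l-i}(S)$. A direct push-pull calculation identifies the bilinear form $(\alpha,\beta)\mapsto\int_X L_X^{d-i}\alpha\cup\phi\psi(\beta)$ on $H^i(X)$ with the pullback of the Lefschetz form $Q_S$ on $H^i(S)$ along the injective morphism of rational Hodge structures $T:=[Z^t]_*\circ L_X^{d-i}:H^i(X)\hookrightarrow H^i(S)$.

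The main obstacle is then to show that $\phi\psi$, or a suitable modification of it, is an automorphism of $H^i(X)$. The raw Lefschetz form $Q_S$ can degenerate on the sub-Hodge structure $T(H^i(X))$, because the Lefschetz primitive decomposition of $H^i(S)$ need not respect arbitrary sub-Hodge structures. The fix is to replace $\psi$ by a modified algebraic operator $\tilde\psi$ realizing the Hodge-Riemann polarization rather than $Q_S$: in place of the single power $L_S^{l-i}$, use a $\mathbb{Q}$-linear combination of powers $L_S^{l-i+2k}$ pre-composed with projections onto the Lefschetz primitive summands $L_S^k P_S^{i-2k}(S)$ of $H^i(S)$ (a polynomial avatar of the Hodge $\ast$-operator in degree $i$). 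The hypothesis on $S$ enters precisely here: by Kleiman's argument already used in Lemma~\ref{devissage}(2), the Lefschetz conjecture on $S$ in degrees $\leq i-2$ makes the K\"unneth components $\pi^j_S$ algebraic for $j\leq i-2$ and $j\geq 2l-i+2$, and hence also the projections onto $L_S^k P_S^{i-2k}(S)$ for $k\geq 1$ (the projection onto $P_S^i(S)$ itself is then the identity minus the rest). With this $\tilde\psi$, the form $(\alpha,\beta)\mapsto\int_X L_X^{d-i}\alpha\cup\phi\tilde\psi(\beta)$ becomes, on $T(H^i(X))\otimes\mathbb{C}$, a positive-definite Hermitian form by the Hodge-Riemann bilinear relations, so $\phi\tilde\psi$ is an automorphism of $H^i(X)$.

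The conclusion is then formal. The algebraic automorphism $A:=\phi\tilde\psi\in\End(H^i(X))$ has inverse $A^{-1}=p(A)$ for some $p\in\mathbb{Q}[t]$ by Cayley-Hamilton, so $A^{-1}$ is algebraic. Writing $\tilde\psi=M\circ L_X^{d-i}$ for an algebraic $M:H^{2d-i}(X)\to H^{2l-i}(S)$, the composition $A^{-1}\circ\phi\circ M:H^{2d-i}(X)\to H^i(X)$ is an algebraic left inverse of $L_X^{d-i}$ and hence, since $L_X^{d-i}$ is an isomorphism, its two-sided inverse. This yields the Lefschetz conjecture in degree $i$ for $X$. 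The hardest step is the construction of $\tilde\psi$---algebraically realizing enough of the Hodge $\ast$-operator in degree $i$ on $S$---which is exactly where the hypothesis that $S$ satisfies the Lefschetz conjecture up to degree $i-2$ is indispensable.
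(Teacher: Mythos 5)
Note first that the paper does not actually prove this statement: it is quoted from \cite{charles}, Proposition 8, and only the ``if'' direction is ever used (via Corollary \ref{recurrence}). Your argument for that substantive direction is essentially the argument of the cited source: pass to the transpose correspondence, replace the raw Lefschetz pairing on $H^i(S)$ by an algebraically realized polarization built from the projectors of the Lefschetz decomposition, invoke the Hodge--Riemann relations to get non-degeneracy on the sub-Hodge structure $T(H^i(X))\subset H^i(S)$, and finish with Cayley--Hamilton; this is correct in substance. Two points need repair, though. First, the operator you write does not typecheck: $L_S^{l-i+2k}\circ p_k$ sends the summand $L_S^kP_S^{i-2k}\subset H^i(S)$ into $H^{2l-i+4k}(S)$, not $H^{2l-i}(S)$, so $\phi$ cannot be applied; the correct ``star'' operator is $\sum_k \varepsilon_k\, L_S^{l-i}\circ p_k$ (equivalently $\varepsilon_k L_S^{l-i+k}$ applied to the primitive component of each summand), with signs $\varepsilon_k=\pm1$ dictated by Hodge--Riemann. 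Second, the algebraicity of the projectors $p_k$ is not a consequence of the algebraicity of the K\"unneth components of $S$; it comes directly from the hypothesis, namely from composing the algebraic inverses of $L_S^{l-j}$ in degrees $j\le i-2$ with powers of $L_S$ (for instance $L_S\circ\lambda_{i-2}\circ L_S^{l-i+1}$ is an algebraic projector onto $\bigoplus_{k\ge1}L_S^kP_S^{i-2k}$ killing $P_S^i$, and one iterates downward). Finally, your dismissal of the ``only if'' direction is too quick: taking $S=X$ requires $X$ to satisfy the Lefschetz conjecture in degrees up to $i-2$, which is not known to follow from the degree $i$ statement alone, and no ``inductive setup'' supplies it; since the paper only uses the ``if'' direction this does not affect anything here, but as written that sentence is not a proof.
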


\begin{cor}\label{recurrence}
Let $X$ be a smooth projective variety of dimension $d$, and let $i\leq d$ be an
integer. Suppose that $X$ satisfies the Lefschetz standard conjecture in degrees
up to $i-1$.

Let $A^i(X)\subset H^i(X)$ be the subspace of classes, which belong to
the subring generated by classes of degree $< i$, and let $Alg^i(X)\subset
H^i(X)$ be the subspace of $H^i(X)$ generated by the cohomology classes of
algebraic cycles\footnote{Note that this subspace is zero unless $i$ is even.}.

Assume that there is a cycle $Z$ of codimension $i$ in $X\times X$ such that the
image of the morphism
$$[Z]_* : H^{2d-i}(X)\ra H^{i}(X)$$
maps surjectively onto the quotient space
$H^{i}(X)/\left[Alg^i(X)+A^i(X)\right]$. Then $X$ satisfies the Lefschetz
standard conjecture in degree $i$.
\end{cor}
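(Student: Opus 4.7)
The plan is to apply Theorem~\ref{Lef-criterion} to $X$ in degree $i$. That theorem reduces the problem to exhibiting a disjoint union $S$ of smooth projective varieties, each of dimension $\geq i$ and satisfying the Lefschetz standard conjecture in degrees up to $i-2$, together with a codimension $i$ cycle $W$ in $X\times S$ whose associated correspondence
$$[W]_*\colon H^{2\dim S-i}(S)\ra H^i(X)$$
is surjective. I will construct $S$ and $W$ by handling the three summands of the decomposition
$$H^i(X)\;=\;[Z]_*H^{2d-i}(X)\;+\;Alg^i(X)\;+\;A^i(X)$$
separately; the first summand is covered by the given cycle $Z$ by hypothesis, and my task is to cover the other two by correspondences from admissible sources.

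For the source $S$, I take a finite disjoint union of copies of $X$ and of $X\times X$. Both varieties have dimension $\geq d\geq i$. The hypothesis gives the Lefschetz standard conjecture for $X$ in degrees up to $i-1$, hence up to $i-2$, and the first part of Lemma~\ref{devissage}(2) upgrades this to the Lefschetz standard conjecture for $X\times X$ in degrees up to $i-1$, again in particular up to $i-2$. Thus every component of $S$ is admissible for Theorem~\ref{Lef-criterion}.

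Now I construct cycles covering the remaining two summands. To cover $Alg^i(X)$ (nonzero only when $i$ is even), pick codimension-$i/2$ algebraic cycles $T_1,\dots,T_r$ in $X$ whose classes span $Alg^i(X)$; by Lemma~\ref{devissage}(1) applied to each $T_k$ and summed, the cycle $\sum_k T_k\times T_k$ in $X\times X$ defines a correspondence $H^{2d-i}(X)\ra H^i(X)$ whose image, by Poincar\'e duality on $X$, is precisely the span of the $[T_k]$, namely $Alg^i(X)$. To cover $A^i(X)$, note that this subspace is generated by cup products $H^j(X)\cdot H^k(X)$ as $(j,k)$ runs through pairs of positive integers with $j+k=i$ (so $1\leq j,k\leq i-1$); for each such pair, Lemma~\ref{devissage}(2) produces a codimension $i$ cycle in $(X\times X)\times X$ whose correspondence $H^{4d-i}(X\times X)\ra H^i(X)$ surjects onto the image of $H^j(X)\otimes H^k(X)$. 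Placing all of these cycles on the appropriate components of $S$ (one copy of $X\times X$ as the source of $Z$, one copy of $X\times X$ for the cycle supported on the $T_k$, and one copy of $X\times X$ as the first two factors of the triple product for each pair $(j,k)$) yields a single cycle $W\in CH^i(X\times S)$ whose correspondence image equals the entire right-hand side of the displayed decomposition, i.e.\ all of $H^i(X)$.

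With $S$ and $W$ so constructed, Theorem~\ref{Lef-criterion} delivers the Lefschetz standard conjecture for $X$ in degree $i$. There is no single hard step; the proof is essentially a bookkeeping combination of Lemma~\ref{devissage} and Theorem~\ref{Lef-criterion}. The point that most deserves verification is that both candidate sources $X$ and $X\times X$ satisfy the Lefschetz conjecture up to degree $i-2$, which is precisely what the induction hypothesis on $X$ and the first part of Lemma~\ref{devissage}(2) supply; the triple-product construction of Lemma~\ref{devissage}(2) then uses this induction hypothesis in an essential way to attack the decomposable subspace $A^i(X)$.
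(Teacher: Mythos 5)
Your proposal is correct and takes essentially the same route as the paper: construct cycles via Lemma~\ref{devissage} covering $Alg^i(X)$ and $A^i(X)$, combine them with $Z$ on a disjoint union of copies of $X\times X$ (which satisfies the Lefschetz conjecture up to degree $i-1$ by the first part of Lemma~\ref{devissage}(2)), and apply Theorem~\ref{Lef-criterion}. The only cosmetic differences are that you bundle the $T_k\times T_k$ into a single cycle rather than one per basis element, and you leave implicit the replacement of $Z$ (and of $\sum_k T_k\times T_k$) by its product with $X$ so that all components of the source are equidimensional copies of $X\times X$, a step the paper makes explicit.
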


\begin{proof}
We use Lemma \ref{devissage}. Let $\alpha_1, \ldots, \alpha_r$ be a basis for
$Alg^i(X)$. We can find codimension $i$ cycles $Z_1, \ldots, Z_r$ in $X\times X$
and $(Z_{j,k})_{j, k>0, j+k=i}$ in $(X\times X)\times X$, such that the image of
the correspondence
$$[Z_l]_* : H^{2d-i}(X)\ra H^{i}(X)$$
contains $\alpha_l$ for $1\leq l \leq r$, and such that the image of the
correspondence
$$[Z_{j,k}]_* : H^{4d-i}(X\times X)\ra H^{i}(X)$$
contains the image of $H^j(X)\otimes H^k(X)$ in $H^{j+k}(X)=H^i(X)$, for
$j+k=i$.

\bigskip

We proved that $X\times X$ satisfies the Lefschetz standard conjecture in degree
up to $i-1$. The disjoint union of the cycles $Z\times X, (Z_l\times X)_{1\leq
l\leq r}$ and $(Z_{j,k})_{j, k>0, j+k=i}$ in a disjoint union of copies of
$(X\times X)\times X$ thus satisfies the hypothesis of Theorem
\ref{Lef-criterion} (we took products with $X$ in order to work with
equidimensional varieties). Indeed, the space generated by the images in
$H^{i}(X)$ of the correspondences $[Z_{j,k}]_*$ contains $A^{i}(X)$ by
definition. Adding the images in $H^{i}(X)$ of the $[Z_l\times X]_*$, which
generate a space containing $Alg^i(X)$, and the image in $H^{i}(X)$ of $[Z\times
X]_*$, which maps surjectively onto $H^{i}(X)/\left[A^{i}(X)+Alg^i(X)\right]$,
we get the whole space $H^{i}(X)$.

This ends the proof, and shows that $X$ satisfies the Lefschetz standard
conjecture in degree $i$.
\end{proof}

\begin{cor}
Let $X$ be a smooth projective variety with cohomology algebra generated in
degree less than $i$, and assume that $X$ satisfies the Lefschetz standard
conjecture in degree up to $i$. Then $X$ satisfies the standard conjectures.
\end{cor}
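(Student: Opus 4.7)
The plan is straightforward induction on the cohomological degree $k$, using Corollary \ref{recurrence} at each step. The base case $k \leq i$ is exactly the hypothesis of the corollary I am trying to prove, so the task reduces to showing the Lefschetz conjecture in degree $k$ for each $k$ with $i < k \leq d$, assuming it in all smaller degrees.

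For the inductive step I would observe that the generation hypothesis makes the obstruction appearing in Corollary \ref{recurrence} vanish. Indeed, if $H^*(X)$ is generated as a $\mathbb{Q}$-algebra by classes of degree $< i$, then for every $k \geq i$, every class in $H^k(X)$ is a $\mathbb{Q}$-linear combination of products of classes of degree $< i$, and in particular of degree $< k$. Hence $A^k(X) = H^k(X)$ in the notation of Corollary \ref{recurrence}, so that the quotient $H^k(X)/[Alg^k(X) + A^k(X)]$ is zero. The surjectivity hypothesis of Corollary \ref{recurrence} is then trivially satisfied by the zero cycle $Z = 0$, and the inductive hypothesis (Lefschetz up to degree $k-1$) gives the remaining assumption of the corollary. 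Thus Corollary \ref{recurrence} yields the Lefschetz conjecture in degree $k$, completing the induction and establishing the Lefschetz standard conjecture for $X$ in all degrees.

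As recalled in the introduction, over $\mathbb{C}$ the Lefschetz standard conjecture implies all the other standard conjectures, so it follows that $X$ satisfies the standard conjectures. There is really no substantive obstacle in the argument: the machinery has been set up in Corollary \ref{recurrence}, and the point is simply to notice that the hypothesis on the generators of $H^*(X)$ forces the potentially transcendental part $H^k(X)/A^k(X)$ to disappear in every degree above the given range, so that the inductive step above degree $i$ becomes automatic.
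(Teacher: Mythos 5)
Your proof is correct and follows the same route as the paper: the generation hypothesis forces $A^k(X)=H^k(X)$ for $k>i$, so Corollary \ref{recurrence} with $Z=0$ runs the induction above degree $i$, and over $\mathbb{C}$ the Lefschetz standard conjecture implies the rest. The paper's own proof is just a one-sentence compression of exactly this argument.
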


\begin{proof}
 Using induction and taking $Z=0$, the previous corollary shows that $X$
satisfies the Lefschetz standard conjecture, hence all the standard conjectures,
since we work in characteristic zero.
\end{proof}

Note that the Lefschetz conjecture is true in degree $1$, as it is a consequence
of the Lefschetz theorem on $(1,1)$-classes. The preceding corollary hence
allows us to recover the Lefschetz conjecture for abelian varieties which was
proved in \cite{lieberman}.

%
\section{Moduli spaces of sheaves on a $K3$ surface}
\label{sec-moduli-spaces-of-sheaves}
Let $S$ be a projective $K3$ surface.
Denote by $K(S,\Integers)$  the topological $K$ group of $S$, generated by
topological complex vector bundles. The $K$-group of a point is $\Integers$ and we 
let $\chi:K(S,\Integers)\rightarrow \Integers$ be the Gysin
homomorphism associated to the morphism from $S$ to a point.
The group $K(S,\Integers)$, endowed with the {\em Mukai pairing}
\[
(v,w) \ \ := \ \ -\chi(v^\vee\otimes w),
\]
is called the {\em Mukai lattice} and denoted by $\Lambda(S)$. Mukai
identifies the group $K(S,\Integers)$ with $H^*(S,\Integers)$, via the
isomorphism  sending a class $F$ to its {\em Mukai vector}
$ch(F)\sqrt{td_S}$. Using the grading of $H^*(S,\Integers)$,
the Mukai vector of $F$ is
\begin{equation}
\label{eq-Mukai-vector}
(\rank(F),c_1(F),\chi(F)-\rank(F)),
\end{equation}
where the rank is considered in
$H^0$ and $\chi(F)-\rank(F)$ in $H^4$ via multiplication by the
orientation class of $S$. The homomorphism
$ch(\bullet)\sqrt{td_S}:\Lambda(S)\rightarrow H^*(S,\Integers)$
is an isometry with respect
to the Mukai pairing on $\Lambda(S)$ and the pairing
\[
\left((r',c',s'),(r'',c'',s'')\right) \ \ = \ \
\int_{S}c'\cup c'' -r'\cup s''-s'\cup r''
\]
on $H^*(S,\Integers)$ (by the Hirzebruch-Riemann-Roch Theorem).
Mukai defines a weight $2$ Hodge structure on the Mukai lattice
$H^*(S,\Integers)$,
and hence on $\Lambda(S)$, by extending that of $H^2(S,\Integers)$,
so that the direct summands $H^0(S,\Integers)$ and $H^4(S,\Integers)$
are of type $(1,1)$ \cite{mukai-hodge}.

Let $v\in \Lambda(S)$ be a primitive class with $c_1(v)$ of Hodge-type $(1,1)$.
There is a system of hyperplanes in the ample cone of $S$, called $v$-walls,
that is countable but locally finite \cite{huybrechts-lehn-book}, Ch. 4C.
An ample class is called {\em $v$-generic}, if it does not
belong to any $v$-wall. Choose a $v$-generic ample class $H$.
Let $\M_H(v)$ be the moduli space of $H$-stable
sheaves on the $K3$ surface $S$ with class $v$.
When non-empty, the moduli space
$\M_H(v)$ is a smooth projective irreducible holomorphic symplectic variety
of $K3^{[n]}$ type, with $n=\frac{(v,v)+2}{2}$.
This result is due to several people, including
Huybrechts, Mukai, O'Grady, and Yoshioka. It can be found in its final form in
\cite{yoshioka-abelian-surface}.

Over $S\times \M_H(v)$ there exists a universal sheaf
$\F$, possibly twisted with respect to a non-trivial
Brauer class pulled-back from $\M_H(v)$.
Associated to $\F$ is a class $[\F]$ in $K(S\times \M_H(v),\Integers)$
(\cite{markman-integral-generators}, Definition 26).
Let $\pi_i$ be the projection from $S\times \M_H(v)$ onto the $i$-th factor.
Assume that $(v,v)>0$.
The second integral cohomology $H^2(\M_H(v),\Integers)$, its Hodge structure,
and its Beauville-Bogomolov pairing \cite{beauville}, are all described by
Mukai's Hodge-isometry
\begin{equation}
\label{eq-Mukai-isomorphism}
\theta \ : \ v^\perp \ \ \  \longrightarrow \ \ \  H^2(\M_H(v),\Integers),
\end{equation}
given by $\theta(x):=c_1\left(\pi_{2_!}\{\pi_1^!(x^\vee)\otimes [\F]\}\right)$
(see \cite{yoshioka-abelian-surface}). Above, $\pi_{2_!}$ and $\pi_1^!$ are
the Gysin and pull-back homomorphisms in $K$-theory.

%
\section{An algebraic cycle}
Let $\M:=\M_H(v)$ be a moduli space of stable sheaves on the $K3$ surface $S$
as in section \ref{sec-moduli-spaces-of-sheaves}, so that $\M$ is of
$K3^{[n]}$-type, $n\geq 2$.
Assume that there exists an untwisted universal sheaf
$\F$ over $S\times \M$.
Denote by $\pi_{ij}$ the projection from $\M\times S\times \M$
onto the product of the $i$-th and $j$-th factors.
Denote by $E^i$ the  relative extension sheaf
\begin{equation}
\label{eq-E}
\SheafExt^i_{\pi_{13}}\left(\pi_{12}^*\F,\pi_{23}^*\F\right).
\end{equation}
Let $\Delta\subset \M\times \M$ be the diagonal.
Then $E^1$ is a reflexive coherent
$\StructureSheaf{\M\times\M}$-module of rank $2n-2$, which is locally free
away from $\Delta$, by \cite{markman-2010}, Proposition 4.5.
The sheaf $E^0$ vanishes, while $E^2$ is isomorphic to
$\StructureSheaf{\Delta}$.
Let $[E^i]\in K(\M\times \M,\Integers)$ be the class of $E^i$, $i=1,2$, and set
\begin{equation}
\label{eq-K-class-E}
[E]:=[E^2]-[E^1].
\end{equation}
Set $\kappa(E^1):=ch(E^1)\exp\left[-c_1(E^1)/(2n-2)\right]$.
Then $\kappa(E^1)$ is independent of the choice of a universal sheaf $\F$.
Let $\kappa_i(E^1)$ be the summand in $H^{2i}(\M)$.
Then $\kappa_1(E^1)=0$.
There exists a suitable choice of $\M_H(v)$, one for each $n$, so that the sheaf
$E^1$ over $\M_H(v)\times \M_H(v)$ can be deformed, as a {\em twisted} coherent sheaf,
to a sheaf $\widetilde{E}^1$ over $X\times X$, for every $X$ of $K3^{[n]}$-type
\cite{markman-2010}. 
See \cite{caldararu-thesis} for the definition of a family of twisted sheaves.
We note here only that such a deformation is equivalent to 
a flat deformations of $\SheafEnd(E^1)$, as a reflexive coherent sheaf, together with 
a deformation of its associative algebra structure.
The characteristic class $\kappa_i(\widetilde{E}^1)$ is well defined
for twisted sheaves \cite{markman-2010}. Furthermore, $\kappa_i(\widetilde{E}^1)$
is a rational class of weight $(i,i)$, which is algebraic, whenever $X$ is
projective.
The construction is summarized in the following statement.

\begin{thm}
\label{thm-deformability}
(\cite{markman-2010}, Theorem 1.7)
Let $X$ be a smooth projective variety of $K3^{[n]}$-type.
Then there exists a smooth and proper family $\pi:\X\rightarrow C$
of irreducible holomorphic symplectic varieties, over
a simply connected reduced (possibly reducible) projective curve $C$, points
$t_0, t_1\in C$, isomorphisms $\M\cong \pi^{-1}(t_0)$ and
$X\cong \pi^{-1}(t_1)$, with the following
property. Let $p:\X\times_C\X\rightarrow C$ be the natural morphism.
The flat section of
the local system $R_{p_*}^*\RationalNumbers$ through the
class $\kappa([E^1])$ in $H^*(\M\times \M)$ is algebraic in $H^*(X_t\times
X_t)$,
for every projective fiber $X_t$, $t\in C$, of $\pi$.
\end{thm}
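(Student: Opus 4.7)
The plan is to deform the reflexive sheaf $E^1$ over $\M\times\M$, together with its support, hyperholomorphically along a family that links $\M$ to $X$. Such a family $\pi:\X\to C$ would come from the global Torelli theorem for irreducible holomorphic symplectic manifolds: the moduli space of marked $K3^{[n]}$-type manifolds is connected, so the marked points corresponding to $\M$ and to $X$ can be joined by a chain of twistor lines. Taking the normalization of such a chain together with two distinguished marked points yields a simply connected reduced projective curve $C$ carrying the desired smooth proper family $\pi$; since generic twistor fibers are non-algebraic, the projective fibers form a countable dense subset of each component of $C$.

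The main step is to propagate $E^1$ along the fibered product $\X\times_C\X\to C$. I would appeal to Verbitsky's theorem on deformations of hyperholomorphic sheaves: if the components of $ch(\SheafEnd(E^1))$ are of Hodge type $(p,p)$ with respect to every complex structure in the twistor family of $\M\times\M$, then the Azumaya algebra $\SheafEnd(E^1)$ deforms, as a reflexive coherent sheaf with its associative algebra structure, over the entire twistor family, and hence $E^1$ deforms as a twisted coherent sheaf $\widetilde{E}^1$ over $\X\times_C\X$. Since $c_1(\SheafEnd(E^1))=0$, the decisive input is that $\kappa_2(E^1)$ stays of type $(2,2)$ along every twistor line. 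This in turn can be proved by expressing $\kappa_2(E^1)$ as a polynomial in the image of Mukai's isomorphism \eqref{eq-Mukai-isomorphism} together with the diagonal class on $\M\times\M$, hence in classes that are invariant under the natural monodromy representation on $H^*(\M\times\M)$ induced by the twistor family; this is ultimately a statement about the cohomology algebra of the Hilbert scheme, accessible via the techniques developed in the later sections of the paper.

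Once $\widetilde{E}^1$ exists globally over $\X\times_C\X$, the class $\kappa(\widetilde{E}^1)$ is well defined on each fiber, independently of the twist, and is locally constant on $C$; it therefore gives a flat section of $R^*_{p_*}\RationalNumbers$ that agrees with $\kappa([E^1])$ at $t_0$ by construction. Restricted to any projective fiber $X_t\times X_t$, the twisted coherent sheaf $\widetilde{E}^1_t$ is algebraic, so its characteristic classes $\kappa_i(\widetilde{E}^1_t)$ are classes of algebraic cycles on the smooth projective variety $X_t\times X_t$, as claimed.

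The main obstacle will be verifying the hyperholomorphicity of $\kappa_2(E^1)$: this requires a detailed analysis of the cohomology algebra of $\M\times\M$, of the Mukai isomorphism, and of how the class of the diagonal pairs with the image of Mukai's map, and it is the technical core of \cite{markman-2010}. A secondary difficulty is the Brauer obstruction that in general prevents $\widetilde{E}^1$ from existing as an honest untwisted sheaf, which is handled by working throughout with twisted sheaves in the sense of C\u{a}ld\u{a}raru and ensuring that the normalization $\kappa(-)=ch(-)\exp[-c_1(-)/(2n-2)]$ renders the resulting characteristic classes independent of the twist.
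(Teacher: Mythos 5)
This theorem is not proved in the paper at all: it is quoted from \cite{markman-2010}, Theorem 1.7, and the only indication given about its proof is that Verbitsky's theory of hyperholomorphic sheaves \cite{kaledin-verbitski-book} plays a central role. Your outline does reconstruct the broad strategy of that reference -- join $\M$ to $X$ by a chain of twistor lines, deform $\SheafEnd(E^1)$ (equivalently $E^1$ as a twisted sheaf) along the chain via Verbitsky's criterion, then use twist-independence of $\kappa$ and algebraicity on the projective fibers -- so at the level of approach you are consistent with the source the paper relies on.

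Judged as a proof, however, there are genuine gaps beyond the one you flag. First, Verbitsky's deformation theorem applies to reflexive sheaves that are slope-(poly)stable with respect to the K\"ahler classes of the twistor families involved; the stability of $\SheafEnd(E^1)$, and its persistence at each vertex of the twistor chain so that the criterion can be iterated, is a substantial part of \cite{markman-2010} and is not addressed (nor is the fact, recorded in section 4 of this paper, that the argument requires a suitable choice of $\M_H(v)$ for each $n$). Second, your proposed route to the $(2,2)$-invariance of $\kappa_2(E^1)$ -- writing it as a polynomial in classes coming from Mukai's isomorphism \eqref{eq-Mukai-isomorphism} and the diagonal, ``hence'' monodromy-invariant -- does not work as stated: the image of Mukai's isomorphism is all of $H^2(\M)$, whose classes are neither monodromy-invariant nor of type $(1,1)$ on non-algebraic deformations, so membership in that subring gives no invariance. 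What is actually needed is the invariance of $\kappa(E^1)$ under the diagonal monodromy action, which in \cite{markman-2010} comes from the equivariance of the universal-sheaf construction (the mechanism quoted here as Theorem \ref{thm-symmetries}, from \cite{markman-monodromy-I}) and identifies the precise invariant combination (essentially the Beauville--Bogomolov class); this is what guarantees the Hodge-type condition along every twistor line of the chain, not just the first. Finally, the well-definedness, twist-independence and algebraicity of $\kappa_i(\widetilde{E}^1_t)$ on projective fibers are asserted rather than argued; note for instance that for $i\geq 3$ the class $\kappa_i$ is not determined by $ch(\SheafEnd(\widetilde{E}^1_t))$, so algebraicity genuinely requires the twisted-sheaf machinery of \cite{markman-2010} and cannot be reduced to the untwisted Azumaya algebra alone.
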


Verbitsky's theory of hyperholomorphic sheaves plays a central role in
the proof of the above theorem, see \cite{kaledin-verbitski-book}.

%
\section{A self-adjoint algebraic correspondence}
Let $K(\M)$ be the topological $K$-group with $\RationalNumbers$ coefficients.
Define the {\em Mukai pairing} on $K(\M)$ by
$(x,y):=-\chi(x^\vee\otimes y)$.
Let $D_\M:H^*(\M)\rightarrow H^*(\M)$ be the automorphism
acting on $H^{2i}(\M)$ via multiplication by $(-1)^i$.
Define the {\em Mukai pairing} on $H^*(\M)$ by
\[
(\alpha,\beta) \ \ \ := \ \ \ -\int_{\M}D_\M(\alpha)\beta.
\]
Define
\begin{equation}
\label{eq-mu}
\mu \ : \ K(\M) \ \ \ \longrightarrow \ \ \ H^*(\M)
\end{equation}
by $\mu(x):=ch(x)\sqrt{td_\M}$.
Then $\mu$ is an isometry, by the Hirzebruch-Riemann-Roch theorem.

\begin{rem}
Note that the graded direct summands $H^i(\M)$ of the cohomology ring
$H^*(\M)$ satisfy the usual orthogonality relation with respect to the Mukai
pairing:
$H^i(\M)$ is orthogonal to $H^j(\M)$, if $i+j\neq 4n$.

This is the main reason for the above definition of the Mukai pairing.
The Chern character $ch:K(\M)\rightarrow H^*(\M)$ is an isometry
with respect to another pairing
$(x,y):=-\int_{\M}D_\M(x)\cdot y \cdot td_\M$.
However, the graded summands $H^i(\M)$ need not satisfy
the orthogonality relations above.
\end{rem}

Given two $\RationalNumbers$-vector spaces $V_1$ and $V_2$,
each endowed with a non-degenerate symmetric bilinear pairing
$(\bullet,\bullet)_{V_i}$, and a homomorphism $h:V_1\rightarrow V_2$, we denote
by
$h^\dagger$ the adjoint operator, defined by the equation
\[
(x,h^\dagger(y))_{V_1}  \ \ := \ \ (h(x),y)_{V_2},
\]
for all $x\in V_1$ and $y\in V_2$.
Set $K(S):=K(S,\Integers)\otimes_\Integers\RationalNumbers$.
We consider  $H^*(\M)$, $K(\M)$, and
$K(S)$,
all as vector spaces over $\RationalNumbers$ endowed with the Mukai pairing.

Let $\pi_i$ be the projection from $\M\times \M$ onto the $i$-th factor,
$i=1,2$.
Define
\[
\tilde{f}' \ : \ K(\M) \ \ \ \rightarrow \ \ \ K(\M),
\]
by $\tilde{f}'(x):=\pi_{2_!}(\pi_1^!(x)\otimes [E])$, where $[E]$ is the class
given in (\ref{eq-K-class-E}), and $\pi_{2_!}$ and $\pi_1^!$ are 
the Gysin and pull-back homomorphisms in $K$-theory.
Let $p_i$ be the projection from $S\times\M$ onto the $i$-th factor.
Define
\[
\phi' \ : \ K(S) \ \ \ \rightarrow \ \ \ K(\M)
\]
by $\phi'(\lambda):=p_{2_!}(p_1^!(\lambda)\otimes [\F])$.
Define
\[
\psi' \ : \ K(\M) \ \ \ \rightarrow \ \ \ K(S)
\]
by $\psi'(x):=p_{1_!}(p_2^!(x)\otimes [\F^\vee])$,
where $\F^\vee$ is the dual class.
We then have the following identities
\begin{eqnarray}
\label{eq-psi-is-adjoint-of-phi}
\psi' & = & (\phi')^\dagger,
\\
\label{eq-self-duality}
\tilde{f}' & = & \phi'\circ\psi'.
\end{eqnarray}
Equality (\ref{eq-psi-is-adjoint-of-phi})  is a $K$-theoretic analogue of
the following well known
fact in algebraic geometry. Let $\Phi:D^b(S)\rightarrow D^b(\M)$
be the Fourier-Mukai functor with kernel $\F$.
Set $\F_R:=\F^\vee\otimes p_1^*\omega_S[2]$ and let
$\Psi:D^b(\M)\rightarrow D^b(S)$ be the Fourier-Mukai functor with kernel
$\F_R$.
Then $\Psi$ is the right adjoint functor of $\Phi$
(\cite{mukai-duality} or \cite{huybrechts-book-FM}, Proposition 5.9).
The classes of $\F^\vee$ and $\F_R$ in $K(S\times \M)$ are equal, since
$\omega_S$ is trivial.
The equality (\ref{eq-psi-is-adjoint-of-phi}) is proven using the same argument
as its
derived-category analogue.
Equality (\ref{eq-self-duality}) expresses the fact that the class $[E]$ is the
convolution
of the classes of $\F^\vee$ and $\F$.
We conclude that $\tilde{f}'$ is {\em self adjoint.}
Set
\[
f' \ \ := \ \ \mu \circ \tilde{f}' \circ \mu^\dagger.
\]
Then $f'$ is the self adjoint endomorphism  given by the algebraic class
\[
\left(\pi_1^*\sqrt{td_\M}\right)ch([E])\left(\pi_2^*\sqrt{td_\M}\right)
\]
in $H^*(\M\times\M)$.

We normalize next the endomorphism $f'$ to an endomorphism $f$. The
latter will be shown to have a monodromy-equivariance property in section 
\ref{sec-monodromy}.
Let $\alpha\in K(\M)$ be the class  satisfying
$ch(\alpha)=\exp\left(\frac{-c_1(\phi'(v^\vee))}{2n-2}\right)$.
Note that $\alpha$ is the class of a $\RationalNumbers$-line-bundle.
Let $\tau_\alpha:K(\M)\rightarrow K(\M)$ be tensorization with $\alpha$, i.e.,
$\tau_\alpha(x):=x\otimes\alpha$. Then $\tau_\alpha$ is an isometry. Hence,
$\tau_\alpha^\dagger=\tau_\alpha^{-1}$.
Set
\begin{eqnarray*}
\phi & := & \tau_\alpha\circ \phi',
\\
\psi & := & \psi'\circ \tau_\alpha^{-1},
\\
\tilde{f} & := & \phi\circ \psi,
\\
f & := & \mu \circ \tilde{f}\circ \mu^\dagger.
\end{eqnarray*}
Then $f$ is the self adjoint endomorphism given by the algebraic class
\begin{equation}
\label{eq-normalized-class-E-and-two-square-roots}
\pi_1^*\exp\left(\frac{c_1(\phi'(v^\vee))}{2n-2}\right)
\left(\pi_1^*\sqrt{td_\M}\right)ch([E])\left(\pi_2^*\sqrt{td_\M}\right)
\pi_2^*\exp\left(\frac{-c_1(\phi'(v^\vee))}{2n-2}\right).
\end{equation}

Let $h_i:H^*(\M)\rightarrow H^{2i}(\M)$ be the projection, and
$e_i:H^{2i}(\M)\rightarrow H^*(\M)$ the inclusion.
Set
\[
f_i \ \ \  := \ \ \  h_i\circ f\circ e_{2n-i}.
\]
Note that $f_i$ is induced by the graded summand
in $H^{4i}(\M\times\M)$ of
the class given in equation (\ref{eq-normalized-class-E-and-two-square-roots}).

%
\section{Generators for the cohomology ring and the image of $f_i$}
Let $A^{2i}\subset H^{2i}(\M)$ be the subspace of classes, which belong to
the subring generated by classes of degree $< 2i$. Set
\[
\overline{H}^{2i}(\M)  \ \ \ := \ \ \
H^{2i}(\M)/\left[A^{2i}+\RationalNumbers\cdot c_i(TX)\right].
\]

\begin{prop}
\label{prop-surjectivity}
The composition
\begin{equation}
\label{eq-bar-f-i}
H^{4n-2i}(\M) \ \LongRightArrowOf{f_i} \ H^{2i}(\M) \ \rightarrow \
\overline{H}^{2i}(\M)
\end{equation}
is surjective, for $i\geq 2$.
\end{prop}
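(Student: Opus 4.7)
The plan is to split the surjectivity of (\ref{eq-bar-f-i}) into two assertions: (a) modulo $A^{2i}+\RationalNumbers\cdot c_{i}(T\M)$, the image of $f_{i}$ contains $ch_{i}(\phi(\lambda))$ for every $\lambda \in K(S)$; and (b) these tautological classes, together with $A^{2i}+\RationalNumbers\cdot c_{i}(T\M)$, span $H^{2i}(\M)$.

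For (a) I would first recast $f$ in the form $g\circ g^{\dagger}$, where $g := \mu\circ\phi : K(S) \to H^{*}(\M)$. Indeed, identity (\ref{eq-psi-is-adjoint-of-phi}) together with the isometry property of $\tau_{\alpha}$ (which gives $\tau_{\alpha}^{\dagger}=\tau_{\alpha}^{-1}$) yields $\psi = \phi^{\dagger}$; since $\mu$ is an isometry, $\mu^{\dagger}=\mu^{-1}$; and so $f = \mu\tilde{f}\mu^{-1} = \mu\phi\phi^{\dagger}\mu^{-1} = g\circ g^{\dagger}$. The image of $g\circ g^{\dagger}$ equals $g((\ker g)^{\perp})$, and by the non-degeneracy of the Mukai form on $K(S)$ this differs from $g(K(S))$ only by a subspace of $g(\ker g)$, which one checks to be absorbed in $A^{2i}$ for $i\geq 2$. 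Next, the degree-$2i$ component of $g(\lambda)=ch(\phi(\lambda))\cdot\sqrt{td_{\M}}$ equals $ch_{i}(\phi(\lambda))$ plus products of $ch_{j}(\phi(\lambda))$ with $[\sqrt{td_{\M}}]_{2k}$ for $j,k>0$ and $j+k=i$ -- each of which lies in $A^{2i}$ -- plus, from the $j=0$ term, a scalar multiple of $[\sqrt{td_{\M}}]_{2i}$, hence of $c_{i}(T\M)$ modulo $A^{2i}$. A parallel analysis absorbs the normalization factors $\exp(\pm c_{1}(\phi'(v^{\vee}))/(2n-2))$ from (\ref{eq-normalized-class-E-and-two-square-roots}) into $A^{2i}$. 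So the image of $f_{i}$ modulo $A^{2i}+\RationalNumbers\cdot c_{i}(T\M)$ is exactly the tautological subspace spanned by $\{ch_{i}(\phi(\lambda))\}_{\lambda\in K(S)}$.

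For (b) I would invoke the generation theorem for the cohomology ring of $\M$ that is established in sections \S6--\S7 of this paper (for Hilbert schemes, this is the theorem of Lehn--Qin--Wang, combined with Markman's deformation-invariance of the tautological subring): $H^{*}(\M)$ is generated, as a $\RationalNumbers$-algebra, by the tautological Chern characters $ch_{j}(\phi(\lambda))$ for $\lambda \in K(S)$ and $j \geq 0$, together with the Chern classes of the tangent bundle. Modulo the subring $A^{2i}$ generated by classes of degree $<2i$, what remains in $H^{2i}(\M)$ is spanned by $ch_{i}(\phi(\lambda))$ and $c_{i}(T\M)$, which combined with (a) gives the proposition.

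The hard part is (b): extracting the generation statement in precisely the form required here, which is the substantive content of the subsequent cohomological analysis. The $c_{i}(T\M)$ summand in $\overline{H}^{2i}(\M)$ is exactly the price paid for the fact that $T\M$ does not lie in the image of $\phi$, and cannot be eliminated. The hypothesis $i\geq 2$ excludes the degenerate degree-$2$ case, where Mukai's isometry (\ref{eq-Mukai-isomorphism}) already identifies the image of $g$ in degree $2$ with $v^{\perp}$ and no higher Chern characters come into play; that case is in any event handled separately via \cite{markman-2010}.
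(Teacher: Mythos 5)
There is a genuine gap, and it sits exactly at the point your proposal treats as routine. Writing $f=\eta\circ\eta^\dagger$ with $\eta=\mu\circ\phi$ is correct (this is how the paper sets things up), but the image of $f_i=\eta_i\circ\eta_i^\dagger$ is $\eta_i\bigl((\ker\eta_i)^\perp\bigr)$, where $\eta_i=h_i\circ\eta$ is the \emph{graded} piece, not the total map $g$; its kernel is much larger than $\ker g$ and depends on $i$. Your justification --- ``by the non-degeneracy of the Mukai form on $K(S)$ this differs from $g(K(S))$ only by a subspace of $g(\ker g)$'' --- does not work: $g(\ker g)=0$ by definition, so the assertion amounts to $\eta_i\bigl((\ker\eta_i)^\perp\bigr)=\Im(\eta_i)$, which requires that the Mukai form restricted to $\ker\eta_i$ be non-degenerate. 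Since the Mukai pairing on $K(S)$ is indefinite, $\ker\eta_i$ could a priori contain a non-trivial radical, in which case $\Im(f_i)$ is a proper subspace of $\Im(\eta_i)$, and nothing forces the missing part to land in $A^{2i}+\RationalNumbers\cdot c_i(T\M)$; the clause ``which one checks to be absorbed in $A^{2i}$'' is precisely the unproved step.

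This is where the paper's monodromy argument (section \ref{sec-monodromy}) enters and it has no counterpart in your proposal: by Theorem \ref{thm-symmetries}, $\eta\circ D_S$ is $O^+\Lambda(S)_v$-equivariant, and $K(S)=\RationalNumbers w\oplus w^\perp_{\RationalNumbers}$ (with $w=D_S(v)$) is an orthogonal decomposition into irreducible representations, each a non-degenerate subspace. Hence each of $\eta_{i,w}$, $\eta_{i,w^\perp}$ is either zero or injective, so $\ker\eta_i$ is one of $0$, $\RationalNumbers w$, $w^\perp_{\RationalNumbers}$, $K(S)$, always non-degenerate, and one gets $\Im(f_i)=\Im(\eta_i)$ (Claim \ref{claim-image-of-f-i}). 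Your part (b), by contrast, is not the hard part: it is the paper's Theorem \ref{thm-generators}, quoted from the generation results of Markman (and Li--Qin--Wang for $S^{[n]}$), and your bookkeeping with $\sqrt{td_\M}$ and the normalization factors reproduces the easy reduction already done there. To repair the proof you must either supply the equivariance/irreducibility argument or otherwise show that the form is non-degenerate on $\ker\eta_i$ for every $i$.
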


The proposition is proven below after Claim \ref{claim-image-of-f-i}.
Let $g_i:H^{4n-2i}(\M)\rightarrow H^{2i}(\M)$ be the homomorphism induced by
the graded summand of degree $4i$ of the cycle
\begin{equation}
\label{eq-deformable-class}
-\left(\pi_1^*\sqrt{td_\M}\right)\kappa(E^1)\left(\pi_2^*\sqrt{td_\M}\right).
\end{equation}
Denote by $\bar{f}_i:H^{4n-2i}(\M)\rightarrow\overline{H}^{2i}(\M)$
the homomorphism given in (\ref{eq-bar-f-i}) and define
$\bar{g}_i:H^{4n-2i}(\M)\rightarrow\overline{H}^{2i}(\M)$ similarly in terms of
$g_i$.

\begin{cor}
\label{cor-comparison-between-kappa-and-f}
$\bar{g}_i=\bar{f}_i$, for $i\geq 2$. In particular, $\bar{g}_i$ is surjective,
for $i\geq 2$.
\end{cor}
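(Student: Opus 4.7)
The plan is to derive an explicit formula for the difference between the classes defining $f_i$ and $g_i$ in $H^{4i}(\M\times\M)$, and then to show that its $(2i,2i)$-K\"unneth component induces an operator whose image lies in $A^{2i} + \RationalNumbers\cdot c_i(T\M)$. The preliminary step is to pin down $c_1(E^1)$. Since $H^1(\M) = 0$ for $\M$ of $K3^{[n]}$-type, the K\"unneth theorem forces $c_1(E^1) \in \pi_1^*H^2(\M) \oplus \pi_2^*H^2(\M)$; Serre duality on $S$ gives $\sigma^*E^1 \cong (E^1)^\vee$, forcing $c_1(E^1)$ to be antisymmetric under the swap of factors. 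Restricting $E^1$ to a slice $\{m_0\}\times \M$ gives, by base change, the sheaf $\SheafExt^1(\F_{m_0}, \F)$, whose K-class equals $2[\StructureSheaf{m_0}] - \phi'(v^\vee)$ (the $\SheafExt^0$ and $\SheafExt^2$ summands being skyscrapers at $m_0$), and hence has first Chern class $-c_1(\phi'(v^\vee))$. These inputs together give $c_1(E^1) = (2n-2)(\pi_1^*\eta - \pi_2^*\eta)$, so
\[
\kappa(E^1) \;=\; ch(E^1)\cdot \pi_1^*(e^{-\eta})\cdot \pi_2^*(e^\eta).
\]

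Next I would simplify the class (5.8) defining $f$. Splitting $ch([E]) = ch(\StructureSheaf{\Delta}) - ch(E^1)$, the $\StructureSheaf{\Delta}$-summand, sandwiched between $\pi_1^*(e^\eta\sqrt{td_\M})$ and $\pi_2^*(e^{-\eta}\sqrt{td_\M})$, collapses by the projection formula to $[\Delta]$: using $ch(\StructureSheaf{\Delta}) = i_*(1/td_\M)$ and $i^*\pi_j^* = \mathrm{id}$, the factor $e^\eta\sqrt{td_\M}\cdot e^{-\eta}\sqrt{td_\M}\cdot td_\M^{-1}$ restricts to $1$ on $\Delta$. Substituting the formula for $\kappa(E^1)$ in the definition of $g_i$ and computing the difference yields the compact identity
\[
(f\text{-class}) - (g\text{-class}) \;=\; [\Delta] \;-\; 2\,ch(E^1)\cdot \pi_1^*\sqrt{td_\M}\cdot \pi_2^*\sqrt{td_\M}\cdot \sinh\!\bigl(\pi_1^*\eta - \pi_2^*\eta\bigr).
\]
The two summands are complementary: $[\Delta]$ is concentrated on the diagonal, while the $\sinh$-factor vanishes there.

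I would then extract the $(2i,2i)$-K\"unneth piece of the degree-$4i$ part. The $[\Delta]$ term contributes only when $i = n$, in which case it produces the identity operator on $H^{2n}$. Expanding $\sinh(\pi_1^*\eta - \pi_2^*\eta)$ as an odd polynomial in pullback monomials $\pi_1^*\eta^a\cdot \pi_2^*\eta^b$, any monomial with $b \geq 1$ multiplies the second K\"unneth leg by the degree-$<2i$ factor $\eta^b$, after which every factor of that leg has degree $<2i$ and the leg lies in $A^{2i}$. The remaining troublesome contributions are the pure-$\pi_1^*\eta^a$ monomials (with $a$ odd) and, for $i = n$, the $[\Delta]$ term itself; in each case the offending second K\"unneth factor takes the form $\sqrt{td_\M}^{(2j_2)}\cdot ch(E^1)^{(P,Q)}_{(2)}$ with $Q + 2j_2 = 2i$. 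When $j_2 = i$ the identity $\sqrt{td_\M}^{(2i)} \equiv (\text{const})\cdot c_i(T\M) \pmod{A^{2i}}$, valid because $c_1(T\M) = 0$ and so $\sqrt{td_\M}^{(2i)}$ is a polynomial in $c_2(T\M),\ldots,c_i(T\M)$ whose only summand escaping $A^{2i}$ is the linear $c_i$-term, places the contribution in $A^{2i} + \RationalNumbers\cdot c_i(T\M)$; when $0 < j_2 < i$, both factors of the second leg have degree $<2i$, so the leg lies in $A^{2i}$.

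The main obstacle is the case $j_2 = 0$, i.e.\ $Q = 2i$: one must show that each K\"unneth component $ch(E^1)^{(P,\,2i)}_{(2)} \in H^{2i}(\M)$ with $P \leq 2i - 2$ lies in $A^{2i} + \RationalNumbers\cdot c_i(T\M)$. This is a structural claim about the Chern character of the extension sheaf $E^1$ on $\M\times \M$ (and, in the special case $i = n$, it is the matching between the $[\Delta]$-identity and the top-degree components of $ch(E^1)$ that is needed). Establishing it requires the explicit analysis of the cohomology algebra of $S^{[n]}$ developed in the remaining sections of the paper, and is closely tied to Claim 6.3 referenced in the proof of Proposition 6.1.
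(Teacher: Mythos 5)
Your approach does not match the paper's, and it has a genuine gap. The paper's proof is a two-line cancellation: by \cite{markman-2010}, Lemma 4.3, $c_1([E])=\pi_1^*c_1(\phi'(v^\vee))-\pi_2^*c_1(\phi'(v^\vee))$, i.e.\ $c_1(E^1)=\pi_2^*c_1(\phi'(v^\vee))-\pi_1^*c_1(\phi'(v^\vee))$, so the factor $\exp\left[-c_1(E^1)/(2n-2)\right]$ in $\kappa(E^1)$ coincides exactly with the product of the two exponential factors in (\ref{eq-normalized-class-E-and-two-square-roots}); hence the $ch(E^1)$-parts of the kernels (\ref{eq-normalized-class-E-and-two-square-roots}) and (\ref{eq-deformable-class}) cancel identically, the difference is $ch(\StructureSheaf{\Delta})\pi_1^*td_\M$, and since $ch_j(\StructureSheaf{\Delta})=0$ for $j<2n$ one gets $f_i=g_i$ for $i\le n-1$; the range $i\ge n$ is then killed by the vanishing $\overline{H}^{2i}(\M)=0$ for $i>n-1$ (\cite{markman-generators}, Lemma 10, part 4), a fact you never invoke. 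Your slice-plus-antisymmetry bookkeeping produces the opposite sign, $c_1(E^1)=(2n-2)(\pi_1^*\eta-\pi_2^*\eta)$, and it is exactly this mismatch between the $\kappa$-normalization and the normalization of (\ref{eq-normalized-class-E-and-two-square-roots}) that creates your spurious $\sinh$-term; with the sign of \cite{markman-2010} there is no such term and no K\"unneth analysis is needed at all.

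Even granting your formula, the argument is incomplete precisely where it matters: the final ``structural claim'' --- that the K\"unneth components $ch(E^1)^{(P,2i)}$ with $P\le 2i-2$ have second legs in $A^{2i}+\RationalNumbers\cdot c_i(T\M)$ --- is left unproven, and it is not what the later sections of the paper supply: Theorem \ref{thm-generators} and Claim \ref{claim-image-of-f-i} prove surjectivity of $\eta_i$ and $f_i$ onto $\overline{H}^{2i}(\M)$, not decomposability of K\"unneth legs of $ch(E^1)$. Worse, such legs (for instance the $(0,2i)$ leg, which on a slice $\{m_0\}\times\M$ restricts to $\pm\, ch_i(\phi'(v^\vee))$, or the $(2,2i)$ legs, which sweep out classes of the form $ch_i(\phi'(\lambda))$) are exactly the classes that by \cite{markman-generators} generate the cohomology ring and have nonzero image in $\overline{H}^{2i}(\M)$ for $2\le i\le n-1$; so the claim should not be expected to hold, and if your difference formula with the $\sinh$-term were correct the corollary itself would fail in that range --- a sign that the sign is the problem. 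Your treatment of $i=n$ by matching $[\Delta]$ against top components of $ch(E^1)$ is likewise unnecessary: for $i\ge n$ the target quotient vanishes. Finally, the ``in particular surjective'' part of the statement is obtained by combining the equality $\bar g_i=\bar f_i$ with Proposition \ref{prop-surjectivity}, which your proposal does not address.
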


\begin{proof}
The equality
$
c_1([E])=\pi_1^*c_1(\phi'(v^\vee))-\pi_2^*c_1(\phi'(v^\vee))
$
is proven in \cite{markman-2010}, Lemma 4.3. Hence,
the difference between the two classes
(\ref{eq-normalized-class-E-and-two-square-roots}) and
(\ref{eq-deformable-class}) is $ch(\StructureSheaf{\Delta})\pi_1^*td_{\M}$.
Now $ch_j(\StructureSheaf{\Delta})=0$, for $0\leq j < 2n$.
Hence, $f_i=g_i$, for $0\leq i \leq n-1$.
The quotient group $\overline{H}^{2i}(\M)$ vanishes, for $i>n-1$,
by \cite{markman-generators}, Lemma 10, part 4.
Consequently, $\bar{f}_i=0=\bar{g}_i$, for $i\geq n$.
%
\end{proof}

Set $\eta:=\mu\circ \phi$, where $\mu$ is given in equation (\ref{eq-mu}).
Then $\eta^\dagger=\psi\circ \mu^\dagger$, and we have
\[
f \ \ = \ \ \eta\circ \eta^\dagger.
\]
Set $\eta_i:=h_i\circ \eta$.
We abuse notation and identify $h_i$ with the endomorphism $e_i\circ h_i$
of $H^*(\M)$. Similarly, we identify $e_{2n-i}$ with  the endomorphism
$e_{2n-i}\circ h_{2n-i}$ of $H^*(\M)$. With this notation we have
\[
h_i^\dagger=e_{2n-i}.
\]

We get the following commutative diagram.
\begin{equation}
\label{eq-diagram}
\xymatrix{
H^{4n-2i}(\M) \ar[rr]^{f_i} \ar[d]^{e_{2n-i}}
\ar@/_7pc/[dddr] _{\eta_i^\dagger} & &
H^{2i}(\M)
\\
H^*(\M) \ar[d]^{\mu^\dagger} \ar[rr]^{f}
& &
H^*(\M) \ar[u]^{h_i}
\\
K(\M) \ar[rr]^{\tilde{f}} \ar[dr]_{\psi} & &
K(\M) \ar[u]^{\mu}_{\cong}
\\
& K(S) \ar[ur]_{\phi}
\ar@/_7pc/[uuur] _{\eta_i}
 }
\end{equation}

The two main ingredients in the proof of
Proposition \ref{prop-surjectivity} are the following Theorem and the monodromy
equivariance of Diagram (\ref{eq-diagram}) reviewed in section
\ref{sec-monodromy}.

\begin{thm}
\label{thm-generators}
The composite homomorphism
\[
K(S)\ \LongRightArrowOf{\eta_i} \ H^{2i}(\M) \
\rightarrow \ \overline{H}^{2i}(\M)
\]
is surjective, for all $i\geq 1$.
\end{thm}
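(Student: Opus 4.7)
The plan is to use Grothendieck--Riemann--Roch to identify the image of $\eta_i$ in $\overline{H}^{2i}(\M)$ with a specific space of tautological classes, and then invoke a generation theorem for the cohomology ring of $\M$ to conclude.

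Unpacking $\eta=\mu\circ\tau_\alpha\circ\phi'$ and applying GRR to $p_{2_!}$, one finds
\begin{equation*}
\eta(\lambda)\ =\ ch(\alpha)\cdot\sqrt{td_\M}\cdot p_{2*}\bigl(p_1^*(ch(\lambda)\cdot td_S)\cdot ch(\F)\bigr).
\end{equation*}
Since $c_1(T\M)=0$ and $c_1(\alpha)\in H^2(\M)$, the cohomological unit $U:=ch(\alpha)\sqrt{td_\M}$ satisfies $U_{2k}\in A^{2k}$ for $1\leq k<i$, while $U_{2i}\in A^{2i}+\RationalNumbers\cdot c_i(T\M)$. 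So after multiplying by $p_{2*}(\cdots)$ and extracting the degree-$2i$ component, every term with $k\geq 1$ falls into $A^{2i}+\RationalNumbers\cdot c_i(T\M)$. Setting $b:=ch(\lambda)\cdot td_S$, which ranges over all of $H^*(S)$ as $\lambda$ ranges over $K(S)$, we obtain
\begin{equation*}
\eta_i(\lambda)\ \equiv\ \bigl[p_{2*}(p_1^* b\cdot ch(\F))\bigr]_{2i}\pmod{A^{2i}+\RationalNumbers\cdot c_i(T\M)}.
\end{equation*}

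Next I would invoke the following generation statement: the space $H^{2i}(\M)$ is spanned, modulo $A^{2i}+\RationalNumbers\cdot c_i(T\M)$, by the tautological classes $p_{2*}(p_1^* a\cdot ch_j(\F))$ with $a\in H^*(S)$ and $j\geq 0$. For $\M=S^{[n]}$ this rests on the classical description of $H^*(S^{[n]})$ by tautological classes, and it extends to every $\M_H(v)$ of $K3^{[n]}$-type by deformation invariance of tautological classes. This is precisely the role of sections 6 and 7 of the paper. Combined with the previous step, the image of $\eta_i$ exhausts $\overline{H}^{2i}(\M)$.

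For the boundary cases: when $i=1$, $A^2=0$ (because $H^1(\M)=0$) and $c_1(T\M)=0$, so $\overline{H}^2(\M)=H^2(\M)$, and the restriction of $\eta_1$ to $v^\perp\subset K(S)$ reduces (up to the $\alpha$-twist, trivial in degree zero) to the Mukai isometry \eqref{eq-Mukai-isomorphism}, hence is surjective. When $i\geq n$ one has $\overline{H}^{2i}(\M)=0$ by \cite{markman-generators}, Lemma 10(4), so nothing is to prove. The principal obstacle lies entirely in the second step: the GRR manipulation is bookkeeping, but establishing that the tautological classes span $H^{2i}(\M)/A^{2i}$ modulo $c_i(T\M)$ for $2\leq i\leq n-1$ requires the substantive cohomological analysis of the Hilbert scheme deferred to later sections.
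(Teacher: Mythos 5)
Your GRR reduction is sound, and it actually makes explicit what the paper's proof leaves implicit: modulo $A^{2i}+\RationalNumbers\cdot c_i(T\M)$ (for $i\geq 2$) the class $\eta_i(\lambda)$ only sees the degree-$2i$ tautological classes $p_{2*}(p_1^*a\cdot ch_j(\F))$, the factors $ch(\alpha)$ and $\sqrt{td_\M}$ being absorbed into the quotient; this is parallel to how the paper passes between $\phi$ and $\phi'$ (equality of $c_1$ on rank-zero classes, equality of higher $ch_i$ modulo the subring generated by $H^2(\M)$). The boundary cases are also handled as in the paper: for $i=1$ surjectivity comes from Mukai's isometry (\ref{eq-Mukai-isomorphism}) (up to the cosmetic point that, because of the $\lambda\mapsto\lambda^\vee$ conventions, one should restrict $\eta_1$ to the dual copy of $v^\perp$ rather than to $v^\perp$ itself), and for $i\geq n$ the quotient vanishes by \cite{markman-generators}, Lemma 10(4).

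The genuine gap is the generation statement, which carries the entire weight of the theorem and which you neither prove nor correctly source. It is not ``the role of sections 6 and 7 of the paper'': those sections \emph{use} it and prove the monodromy comparison between $f_i$ and $\eta_i$. The input the paper relies on is an external theorem, namely that the subspaces $ch_i(\phi'(K(S)))$, $i\geq 1$, generate the cohomology ring $H^*(\M_H(v))$ for \emph{every} such moduli space (\cite{markman-generators}, Corollary 2); the result of \cite{lqw} covers only $\M=S^{[n]}$. Your proposed substitute --- extending from $S^{[n]}$ to a general $\M_H(v)$ ``by deformation invariance of tautological classes'' --- is an unsubstantiated and substantive claim: a general $\M_H(v)$ is connected to $S^{[n]}$ only through deformations of the data $(S,H,v)$, along which a universal sheaf need not exist (it may exist only as a twisted sheaf), and one must argue both that the tautological classes extend to flat sections of the relevant local system over such a family and that spanning modulo $A^{2i}+\RationalNumbers\cdot c_i(T\M)$ propagates. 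That is essentially the content of Markman's generation theorem itself, which is proved by other means (a decomposition of the class of the diagonal of $\M\times\M$ in the spirit of Ellingsrud--Str{\o}mme), not by deformation from the Hilbert scheme. With the correct citation in place your argument closes and coincides in substance with the paper's; without it, the crucial step for $2\leq i\leq n-1$ is missing.
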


\begin{proof}
The subspaces $ch_i(\phi'(K(S))$, $i\geq 1$, generate the
cohomology ring $H^*(\M)$, by (\cite{markman-generators}, Corollary 2).
When $\M=S^{[n]}$, this was proven independently in \cite{lqw}.
The same statement holds for the subspaces $ch_i(\phi(K(S))$.
Indeed, $ch_1(\phi(K(S))=ch_1(\phi'(K(S))=
H^2(\M)$, since $\phi'(\lambda^\vee)$ is a class of rank $0$, for $\lambda\in
v^\perp$,
and so $c_1(\phi'(\lambda^\vee))=c_1(\phi(\lambda^\vee))$, for
$\lambda\in v^\perp$. Now $ch_1(\phi'([v^\perp]^\vee))=H^2(\M)$,
since Mukai's isometry given in
(\ref{eq-Mukai-isomorphism}) is surjective.
For $i>1$, the subspaces
$ch_i(\phi'(K(S))$ and $ch_i(\phi(K(S))$
are equal modulo the subring generated by
$H^2(\M)$. The surjectivity of the composite homomorphism follows.
\end{proof}

\begin{claim}
If $\eta_i$ is injective, then $Im(f_i)=Im(\eta_i)$.
\end{claim}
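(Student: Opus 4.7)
The plan is to unwind Diagram (\ref{eq-diagram}) so as to express $f_i$ as $\eta_i\circ \eta_i^\dagger$, and then conclude by a short linear-algebra argument using non-degeneracy of the Mukai pairings.

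First, from $\eta=\mu\circ\phi$, $\eta^\dagger=\psi\circ\mu^\dagger$, $f=\eta\circ\eta^\dagger$, and the identification $h_i^\dagger=e_{2n-i}$ noted above, I would read off
\[
f_i \;=\; h_i\circ f\circ e_{2n-i} \;=\; (h_i\circ\eta)\circ(\eta^\dagger\circ e_{2n-i}) \;=\; \eta_i\circ \eta_i^\dagger,
\]
where $\eta_i^\dagger:H^{4n-2i}(\M)\to K(S)$ is the adjoint of $\eta_i:K(S)\to H^{2i}(\M)$ with respect to the Mukai pairing on $K(S)$ and the perfect duality $H^{2i}(\M)\times H^{4n-2i}(\M)\to \RationalNumbers$ cut out from the Mukai pairing on $H^*(\M)$. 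The verification that $\eta^\dagger\circ e_{2n-i}$ is indeed this adjoint uses only the orthogonality of $H^{2j}(\M)$ and $H^{2k}(\M)$ under the Mukai pairing when $j+k\neq 2n$, recorded in the remark following (\ref{eq-mu}). From this factorization the inclusion $\Im(f_i)\subseteq\Im(\eta_i)$ is immediate.

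For the reverse inclusion, I would apply the elementary identity $\Im(T^\dagger)=\ker(T)^\perp$, valid for any linear map $T$ between finite-dimensional vector spaces equipped with non-degenerate bilinear pairings, to $T=\eta_i$. The injectivity hypothesis forces $\ker(\eta_i)=0$, so $\Im(\eta_i^\dagger)=K(S)$ by non-degeneracy of the Mukai pairing on $K(S)$, and therefore
\[
\Im(f_i) \;=\; \eta_i\bigl(\Im(\eta_i^\dagger)\bigr) \;=\; \eta_i(K(S)) \;=\; \Im(\eta_i).
\]

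There is no real obstacle here: the entire argument is formal linear algebra resting on the self-adjointness of $f$, itself a consequence of the identity $\tilde{f}'=\phi'\circ(\phi')^\dagger$ of (\ref{eq-self-duality}), together with the non-degeneracy of the Mukai pairings. The only point requiring any care is the bookkeeping needed to confirm that $\eta^\dagger\circ e_{2n-i}$ serves as the adjoint of $\eta_i$ for the correct pair of pairings, and this in turn reduces to the orthogonality of distinct graded summands of $H^*(\M)$.
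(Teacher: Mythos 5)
Your proof is correct and follows essentially the same route as the paper's: the factorization $f_i=\eta_i\circ\eta_i^\dagger$ read off from the diagram, plus the observation that injectivity of $\eta_i$ makes $\eta_i^\dagger$ surjective via non-degeneracy of the pairings. Your write-up merely makes explicit the adjoint bookkeeping that the paper leaves implicit.
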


\begin{proof}
The assumption implies that $\eta_i^\dagger$ is surjective. Furthermore, we have
$
f_i=
\eta_i\circ\eta_i^\dagger.
$
The equality $Im(f_i)=Im(\eta_i)$ follows.
\end{proof}

In the next section we will prove an analogue of the above claim,
without the assumption that $\eta_i$ is injective (see Claim
\ref{claim-image-of-f-i}).

%
\section{Monodromy}
\label{sec-monodromy}
Recall that the Mukai lattice $\Lambda(S)$ is a rank $24$ integral lattice isometric
to the orthogonal direct sum $E_8(-1)^{\oplus 2}\oplus U^{\oplus 4}$,
where $E_8(-1)$ is the negative definite $E_8$ lattice and $U$ is the
unimodular rank $2$ hyperbolic lattice \cite{mukai-hodge}.
Recall that $\M$ is the moduli space $\M_H(v)$.
Denote by $O^+\Lambda(S)_v$ the subgroup of isometries of the Mukai lattice,
which send $v$ to itself and preserve the spinor norm.
The {\em spinor norm} is the character
$O\Lambda(S)\rightarrow \{\pm 1\}$, which sends reflections by $-2$ vectors to
$1$ and
reflections by $+2$ vectors to $-1$.
The group $O^+\Lambda(S)_v$ acts on $\Lambda(S)$ and on
$K(S)\cong \Lambda(S)\otimes_\Integers\RationalNumbers$ via the natural action.

\hide{
Let $\theta:v^\perp\rightarrow H^2(\M,\Integers)$ be the Mukai isomorphism,
given in (\ref{eq-Mukai-isomorphism}).
Note the $c_1(\phi(\lambda^\vee))=\theta(\lambda)$, for all $\lambda\in
v^\perp$.
Let $w\in v^\perp$ be the class satisfying $c_1(\phi(v^\vee))=\theta(w)$.
Given an isometry $g\in O^+\Lambda(S)_v$, let $\ell_g$ be the complex
topological
line bundle on $\M$ satisfying
\[
c_1(\ell_g) \ \ \ := \ \ \ g(w)-
\]
}

Let $D_S:K(S)\rightarrow K(S)$ be given by
$D_S(\lambda)=\lambda^\vee$.

\begin{thm}
\label{thm-symmetries}
\begin{enumerate}
\item
(\cite{markman-monodromy-I}, Theorem 1.6)
There exist  natural homomorphisms
\begin{eqnarray*}
\monrep \ : \ O^+\Lambda(S)_v  & \longrightarrow & GL\left[H^*(\M)\right],
\\
\widetilde{\monrep} \ : \ O^+\Lambda(S)_v  & \longrightarrow &
GL\left[K(\M)\right],
\end{eqnarray*}
introducing an action of $O^+\Lambda(S)_v$ on both $H^*(\M)$  and $K(\M)$
via monodromy operators. Denote the image of $g\in O^+\Lambda(S)_v$
by $\monrep_g$ and $\widetilde{\monrep}_g$.
\item
\label{thm-item-automorphic-factor}
(\cite{markman-monodromy-I}, Theorem 3.10) The equation
\begin{equation}
\label{eq-equivariance}
\widetilde{\monrep}_g(\phi(\lambda^\vee))=\phi\left([g(\lambda)]^\vee\right)
\end{equation}
holds for every $g\in O^+\Lambda(S)_v$, for all $\lambda\in \Lambda(S)$.
Consequently, the composite homomorphism
\[
K(S) \ \LongRightArrowOf{D_S}
K(S) \ \LongRightArrowOf{\eta} H^*(M)
\]
is $O^+\Lambda(S)_v$ equivariant.\footnote{The appearance of $\lambda^\vee$
instead
of $\lambda$ as an argument of $\phi$ in (\ref{eq-equivariance}),
as well as in equation (\ref{eq-Mukai-isomorphism}) for Mukai's isometry,
is due to the fact that we use the Mukai pairing
to identify $\Lambda(S)$ with its dual.
So the class of the kernel 
$[\F]\otimes p_2^*\exp\left(\frac{-c_1(\phi'(v^\vee))}{2n-2}\right)$ of $\phi$ in
$K(S\times \M)\cong K(S)\otimes K(\M)$ is $O^+\Lambda(S)_v$ invariant
with respect to the usual action of $O^+\Lambda(S)_v$ on the first factor, and
the monodromy action on the second.}
\end{enumerate}
\end{thm}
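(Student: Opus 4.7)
The plan is to construct the monodromy representations of part (1) via parallel transport in a family of moduli spaces of sheaves on deforming K3 surfaces. I would assemble a smooth proper family $\pi:\X\rightarrow B$ whose base parameterizes triples $(S',v',H')$ consisting of a K3 surface $S'$, a primitive Mukai vector $v'\in\Lambda(S')$ of Hodge type $(1,1)$ with $(v',v')=(v,v)$, and a $v'$-generic polarization $H'$. Each fiber is of $K3^{[n]}$-type, and parallel transport in the local system $R^i\pi_*\RationalNumbers$ (and in its topological $K$-theory analogue) produces automorphisms of $H^*(\M)$ and of $K(\M)$ respectively.

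For each $g\in O^+\Lambda(S)_v$, I would realize $g$ as the action of a loop $\gamma$ in $B$ based at the point representing $(S,v,H)$. The existence of such a loop follows from the global Torelli theorem for K3 surfaces together with a density argument in the period domain of marked pairs $(S',v')$; the restriction to the $+$-part is dictated by the requirement that parallel transport preserve the orientation of the positive cone, equivalently the spinor norm. Well-definedness of $\monrep_g$ requires checking that two loops inducing the same action on $\Lambda(S)$ induce the same action on $H^*(\M)$. This is where one invokes the fact that $H^*(\M)$ is generated as a ring by the image of $\phi$, the result from \cite{markman-generators} already used in the proof of Theorem \ref{thm-generators}, so that the action on $H^*(\M)$ is determined by its compatibility with $\phi$, established in part (2).

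For part (2), the equivariance formula $\widetilde{\monrep}_g(\phi(\lambda^\vee))=\phi([g(\lambda)]^\vee)$ reflects the fact that the (possibly twisted) universal sheaf $\F$ extends to a universal sheaf over $\S\times_B\X$, where $\S\rightarrow B$ denotes the universal K3 surface. Under parallel transport along $\gamma$ realizing $g$, the class $[\F]$ in $K$-theory is preserved up to tensoring by the pull-back of a line-bundle class depending on $c_1(\phi'(v^\vee))$. This is precisely why the normalization $\phi=\tau_\alpha\circ\phi'$ is introduced: the corrected kernel class is genuinely $O^+\Lambda(S)_v$-invariant inside $K(S\times\M)\cong K(S)\otimes K(\M)$, as asserted in the footnote. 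The equivariance of the composition $\eta\circ D_S$ is then a formal consequence, since $D_S$ is tautologically $O^+\Lambda(S)_v$-equivariant and $\mu$ commutes with the monodromy action.

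The main obstacle is showing that the monodromy representation really lands in, and surjects onto, $O^+\Lambda(S)_v$, with consistent behaviour across the $K$-theoretic and cohomological incarnations. Verbitsky's global Torelli theorem provides that the monodromy group of $\M$ is an arithmetic subgroup of $O(H^2(\M,\Integers))$, but upgrading this to an action of the full group $O^+\Lambda(S)_v$ compatible with the $\phi$-equivariance requires a careful analysis of the interplay between deformations of $\M$ as a hyperk\"ahler manifold and deformations of the triple $(S,v,H)$ as a polarized K3 together with a primitive Mukai vector. This is the technical core of \cite{markman-monodromy-I}.
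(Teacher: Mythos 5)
You should first note that the paper does not prove this statement at all: both parts are quoted results (\cite{markman-monodromy-I}, Theorems 1.6 and 3.10), and the only content the paper adds is the formal observation, recorded in the footnote, that the equivariance of $\eta\circ D_S$ follows from (\ref{eq-equivariance}) together with the compatibility of $\mu$ with the two monodromy actions. Your proposal instead sketches a proof of the cited theorems themselves, so it has to be judged on those terms, and as a sketch of that construction it has a genuine gap.

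The step that fails is the claim that every $g\in O^+\Lambda(S)_v$ can be realized by parallel transport around a loop in a base $B$ parameterizing triples $(S',v',H')$. In any connected family of K3 surfaces the degree $0$ and degree $4$ summands of the Mukai lattice are canonically trivialized (by the unit and the orientation class), so the monodromy of such a family acts trivially on $H^0\oplus H^4$ and produces only isometries coming from $O(H^2(S,\Integers))$, extended trivially. The group $O^+\Lambda(S)_v$ is much larger: it contains, for instance, reflections in $-2$ vectors $u\in v^\perp$ whose rank component is nonzero, and these move the class $(0,0,1)$, hence cannot arise from any loop in your base $B$, no matter what Torelli or density argument is invoked. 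This is precisely why the construction in \cite{markman-monodromy-I} supplements deformations of $(S,v,H)$ with operators induced by Fourier--Mukai equivalences (tensorization by line bundles, duality, reflections associated to spherical classes) identifying different moduli spaces $\M_H(v)$, and why the equivariance statement (\ref{eq-equivariance}) is a substantial theorem there rather than a bookkeeping of the universal sheaf through a deformation. A second, logical, problem is that your argument for well-definedness of $\monrep_g$ is circular: you fix the action on $H^*(\M)$ by requiring compatibility with $\phi$ ``established in part (2)'', but part (2) presupposes that the representation $\widetilde{\monrep}$ has already been constructed. (The appeal to Verbitsky's global Torelli theorem is likewise neither available in, nor needed for, the cited construction.) If you only intend to prove what the paper actually uses beyond the citation, the correct and short argument is: granting parts (1) and (2), apply $\mu$ to (\ref{eq-equivariance}) and use that $\mu$ intertwines $\widetilde{\monrep}_g$ and $\monrep_g$, which gives the asserted equivariance of $\eta\circ D_S$ with respect to the $D_S$-conjugated action on $K(S)$.
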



Set $w:=D_S(v)$.
We have the orthogonal direct sum decomposition
\[
K(S) \ \ = \ \
\RationalNumbers w \oplus w^\perp_{\RationalNumbers}
\]
into two distinct irreducible representations of $O^+\Lambda(S)_v$, where we
consider
a new action of $O^+\Lambda(S)_v$ on $K(S)$, i.e., the conjugate by $D_S$ of the
old one.
So $g\in O^+\Lambda(S)_v$  acts on $K(S)$ via $D_S\circ g\circ D_S$.
Let $\pi_w : K(S)\rightarrow \RationalNumbers w$ and
$\pi_{w^\perp} : K(S)\rightarrow w^\perp_{\RationalNumbers}$
be the orthogonal projections.
Let $\eta_w:\RationalNumbers w\rightarrow H^*(\M)$
be the restriction of $\eta$ to $\RationalNumbers w$
and $\eta_{w^\perp}:w^\perp_\RationalNumbers\rightarrow H^*(\M)$
the restriction of $\eta$ to $w^\perp_{\RationalNumbers}$.
We have
\[
\pi_w\circ \eta^\dagger \ = \ (\eta_w)^\dagger \ \ \ \mbox{and} \ \ \
\pi_{w^\perp}\circ \eta^\dagger \ = \ (\eta_{w^\perp})^\dagger.
\]
Set $\eta_{i,w}:=h_i\circ\eta_w$ and $\eta_{i,w^\perp}:=h_i\circ
\eta_{w^\perp}$.
Then
\begin{eqnarray*}
(\eta_{i,w})^\dagger & = & \pi_w\circ \eta^\dagger\circ e_{2n-i},
\\
(\eta_{i,w^\perp})^\dagger & = & \pi_{w^\perp}\circ \eta^\dagger\circ e_{2n-i}.
\end{eqnarray*}

\begin{claim}
\label{claim-image-of-f-i}
The homomorphisms $f_i$ and $\eta_i$ in diagram
(\ref{eq-diagram}) have the same image in $H^{2i}(\M)$.
\end{claim}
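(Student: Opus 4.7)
The first step is to observe the algebraic identity $f_i = \eta_i \circ \eta_i^\dagger$. This follows immediately from $f = \eta\circ \eta^\dagger$, the definition $\eta_i = h_i\circ \eta$, and the equality $h_i^\dagger = e_{2n-i}$ recorded in the excerpt: indeed $f_i = h_i\circ \eta\circ \eta^\dagger\circ e_{2n-i} = h_i\circ \eta\circ(h_i\circ\eta)^\dagger$. Consequently $\Im(f_i)\subseteq \Im(\eta_i)$ is automatic, and the content of the claim is really the reverse inclusion. If the Mukai pairing on $K(S)$ were positive definite the identity $\Im(A\circ A^\dagger) = \Im(A)$ would finish the proof, but the pairing is indefinite, and an extra idea is needed.

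My plan is to use the following standard fact: for any homomorphism $A\colon V_1\rightarrow V_2$ of finite-dimensional vector spaces with non-degenerate bilinear forms one has $\Im(A^\dagger) = \ker(A)^\perp$ (one inclusion is immediate, and equality follows by a dimension count). Applied to $\eta_i$, this gives $\Im(f_i) = \eta_i\bigl(\ker(\eta_i)^\perp\bigr)$, so it suffices to prove
\[
K(S) \ = \ \ker(\eta_i) + \ker(\eta_i)^\perp,
\]
since then $\eta_i(\ker(\eta_i)^\perp) = \eta_i(K(S)) = \Im(\eta_i)$.

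To establish this I will invoke the monodromy equivariance of Theorem~\ref{thm-symmetries}(2), which says that $\eta\circ D_S$ is $O^+\Lambda(S)_v$-equivariant. Rephrased, this says that $\eta$, and hence $\eta_i = h_i\circ \eta$, is equivariant with respect to the conjugated action $g\cdot x := D_S\circ g\circ D_S(x)$ of $O^+\Lambda(S)_v$ on $K(S)$ and the monodromy action on $H^{2i}(\M)$. Thus $\ker(\eta_i)$ is an $O^+\Lambda(S)_v$-subrepresentation of $K(S)$, and the decomposition $K(S) = \RationalNumbers w\oplus w^\perp_\RationalNumbers$ into two inequivalent irreducibles, recalled in the excerpt, forces $\ker(\eta_i)$ to be one of the four subspaces $\{0,\ \RationalNumbers w,\ w^\perp_\RationalNumbers,\ K(S)\}$.

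The case $\ker(\eta_i) = K(S)$ is trivial; in the remaining three cases the equality $K(S) = \ker(\eta_i)\oplus \ker(\eta_i)^\perp$ drops out of the non-degeneracy of the Mukai form on $\RationalNumbers w$, i.e.\ of $(w,w) = (v,v) = 2n-2 \neq 0$, which makes $\RationalNumbers w$ and $w^\perp_\RationalNumbers$ mutually non-degenerate orthogonal complements. The main point of the proof is really the preceding paragraph: the reduction to only four possibilities for $\ker(\eta_i)$, which relies on the irreducibility of $w^\perp_\RationalNumbers$ under the $O^+\Lambda(S)_v$-action. Everything else is formal.
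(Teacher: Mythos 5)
Your argument is correct and is essentially the paper's own proof: both rest on the identity $f_i=\eta_i\circ\eta_i^\dagger$, on the orthogonal decomposition $K(S)=\RationalNumbers w\oplus w^\perp_{\RationalNumbers}$ into two inequivalent irreducible $O^+\Lambda(S)_v$-representations, and on the monodromy equivariance supplied by Theorem \ref{thm-symmetries}. The only difference is organizational: the paper splits $\eta_i$ into $\eta_{i,w}$ and $\eta_{i,w^\perp}$ and compares images summand by summand, whereas you classify $\ker(\eta_i)$ as a subrepresentation and use $\Im(\eta_i^\dagger)=\ker(\eta_i)^\perp$, an equivalent repackaging of the same idea.
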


\begin{proof}
Clearly, $\eta_{i,w}$ is injective, if it does not vanish.
We observe next that the same is true for $\eta_{i,w^\perp}$.
This follows from the fact that $\eta_{i,w^\perp}$
is equivariant with respect to the action of
the group $O^+\Lambda(S)_v$
(Theorem \ref{thm-symmetries}, part \ref{thm-item-automorphic-factor}).
Now $w^\perp_{\RationalNumbers}$ is an irreducible representation of
$O^+\Lambda(S)_v$.
Hence, $\eta_{i,w^\perp}$ is injective, if and only if it does not
vanish. We have
\begin{eqnarray*}
\eta_i&=&\eta_{i,w}\circ\pi_w+\eta_{i,w^\perp}\circ\pi_{w^\perp},
\\
(\eta_i)^\dagger&=&(\eta_{i,w})^\dagger+(\eta_{i,w^\perp})^\dagger, \ \mbox{and}
\\
f_i &=& \eta_{i,w}\circ (\eta_{i,w})^\dagger+
\eta_{i,w^\perp}\circ (\eta_{i,w^\perp})^\dagger.
\end{eqnarray*}
Furthermore, the image of $(\eta_{i,w})^\dagger$ is equal to $\RationalNumbers
w$,
if $\eta_{w,i}$ does not vanish,
and the image of $(\eta_{i,w^\perp})^\dagger$ is equal to
$w^\perp_\RationalNumbers$,
if $\eta_{i,w^\perp}$ does not vanish.
Hence, the image of $\eta_{i,w}\circ (\eta_{i,w})^\dagger$ is equal to the image
of
$\eta_{i,w}$ and the image of
$\eta_{i,w^\perp}\circ (\eta_{i,w^\perp})^\dagger$ is equal to the image of
$\eta_{i,w^\perp}$.
The image of $f_i$ is thus equal to
the sum of the images of $\eta_{i,w}$ and $\eta_{i,w^\perp}$.
The latter is precisely the image of $\eta_i$.
\end{proof}

\begin{proof}
(Of Proposition \ref{prop-surjectivity})
Follows immediately from
Theorem \ref{thm-generators} and Claim \ref{claim-image-of-f-i}.
\end{proof}

%

\section{Proof of the main theorem}
\label{sec-proof-of-main-thm}

We can now prove the main result of this note. We use the notations of section 2.

\begin{proof}[Proof of Theorem \ref{thm-lefschetz}]
Let $X$ be a smooth projective variety of $K3^{[n]}$-type. According to Theorem
\ref{thm-deformability}, there exists a smooth and proper family
$p:\X\rightarrow C$ of irreducible holomorphic symplectic varieties, over a
connected reduced projective curve $C$, points $t_1, t_2\in C$, isomorphisms
$\M\cong p^{-1}(t_1)$ and $X\cong p^{-1}(t_2)$. Additionally, if
$q:\X\times_C\X\rightarrow C$ is the natural morphism, there exists a flat
section $s$ of the local system $R_{q_*}^*\RationalNumbers$ through the class
$\kappa([E^1])$ in $H^*(\M\times \M)$ which is algebraic in $H^*(X_t\times
X_t)$, for every projective fiber $X_t$, $t\in C$, of $p$.

Let us denote by $Z_i$ an algebraic cycle in $H^{2i}(X\times X)$ with cohomology
class the degree $2i$ component of $\left(\pi_1^*\sqrt{td_X}\right)
s(t_2)\left(\pi_2^*\sqrt{td_X}\right)$, where $\pi_1$ and $\pi_2$ are the two
projections $X\times X \ra X$.

\bigskip

Using the cycles $Z_i$, we prove by induction on $i\leq n$ that $X$ satisfies
the Lefschetz standard conjecture in degree $2i$ for every integer $i$ -- recall
that the cohomology groups of $X$ vanish in odd degrees. This is obvious for
$i=0$.

Let $i\leq n$ be a positive integer. Assume that the Lefschetz conjecture holds
for $X$ in degrees up to $2i-1$. Let us show that the morphism
$$H^{4n-2i}(X) \ \LongRightArrowOf{[Z_i]_*} \ H^{2i}(X) \ \rightarrow
H^{2i}(X)/\left[A^{2i}(X)+\RationalNumbers\cdot c_i(TX)\right]$$
is surjective. Since $p$ and $q$ are smooth, the morphism above is the fiber at
$t_2$ of the morphism
$$R^{4n-2i}p_*\RationalNumbers \ra R^{2i}p_*\RationalNumbers$$
of local systems over $C$ induced by $s$, which implies that it is surjective at
$t_2$ if and only if it is surjective at $t_1$. The fiber at $t_1$ of this
morphism is induced by the class $\kappa([E^1])$, which shows that it is
surjective by Corollary \ref{cor-comparison-between-kappa-and-f}.

Corollary \ref{recurrence} now shows that $X$ satisfies the Lefschetz standard
conjecture in degree $2i$, which concludes the proof.
\end{proof}

\begin{rem}
Note that the proof of the main result of this note makes essential use of 
deformations of hyperk\"ahler varieties along twistor lines,
and that a general deformation of a hyperk\"ahler variety along a twistor line is 
never algebraic, see \cite{huybrects-basic-results}, 1.17.
Though the standard conjectures deal with projective varieties, we do not know 
a purely algebraic proof of the result of this note.
\end{rem}

{\bf Acknowledgements:} The work on this paper began during
the authors visit at the University of Bonn in May, 2010.
We would like to thank Daniel Huybrechts
for the invitations, for his hospitality, and for his contribution to the
stimulating conversations the three of us held regarding the paper
\cite{charles}. The first author also wants to thank Claire Voisin for 
numerous discussions.



\begin{thebibliography}{B-N-R}

\bibitem[An]{andre} Andr\'e, Y.: {\em Pour une th\'eorie inconditionnelle des motifs.} 
Pub. Math. IH\'ES 83, p. 5-49 (1996).

\bibitem[Ar]{arapura} Arapura, D.:
{\em Motivation for Hodge cycles.\/}
Adv. Math.  207  (2006),  no. 2, 762--781.

\bibitem[Be1]{beauville}
Beauville, A.: {\em Varietes K\"ahleriennes dont la premiere classe de Chern
est nulle.}  J. Diff. Geom. 18, p. 755-782 (1983).

\bibitem[Ca]{caldararu-thesis} C\u{a}ld\u{a}raru, A.:
{Derived categories of twisted sheaves on
Calabi-Yau manifolds.\/} Thesis, Cornell Univ., May 2000.

\bibitem[C]{charles} Charles, F.:
{Remarks on the Lefschetz standard conjecture and hyperk\"{a}hler
varieties.\/} Preprint, arXiv:1002.5011v3 (2010), to appear in Comm. Math. Helvetici.

\bibitem[ES]{ES}
Ellingsrud, G., Str{\o}mme, S. A.:
{\em Towards the Chow ring of the Hilbert scheme of $\PP^2$.\/}
J. Reine Angew. Math. 441 (1993), 33--44.

\bibitem[G\"{o}]{gottsche} G\"{o}ttsche, L.:
{\em Hilbert schemes of zero-dimensional subschemes of smooth varieties.\/}
Lecture Notes in Mathematics, 1572. Springer-Verlag, Berlin, 1994.

\bibitem[Gr]{Gr69} Grothendieck, A.: {\em Standard conjectures on algebraic
cycles.\/} Algebraic Geometry (Internat.
Colloq., Tata Inst. Fund. Res., Bombay, 1968) pp. 193199 Oxford Univ. Press,
London, 1969.

\bibitem[HL]{huybrechts-lehn-book}
Huybrechts, D, Lehn, M.:
{\em The geometry of moduli spaces of sheaves.\/}
Aspects of Mathematics, E31. Friedr. Vieweg \& Sohn, Braunschweig, 1997.

\bibitem[Hu1]{huybrects-basic-results}
Huybrechts, D.:
{\em Compact Hyperk\"{a}hler Manifolds: Basic results.\/}
Invent. Math. 135 (1999), no. 1, 63-113 and
Erratum: Invent. Math. 152 (2003), no. 1, 209--212.

\bibitem[Hu2]{huybrechts-book-FM} Huybrechts, D.:
{\em Fourier-Mukai Transforms in Algebraic Geometry.\/}
Oxford University Press, 2006.

\bibitem[K1]{Kl68} Kleiman S.: {\em Algebraic cycles and the Weil
conjectures\/}, Dix expos\'es sur la cohomologie
des sch\'emas, North-Holland, Amsterdam, 1968, 359-386.

\bibitem[K2]{Kl94} Kleiman S.: {\em The Standard Conjectures.\/}
Proceedings of Symposia in Pure Math., Volume 55 (1994), Part 1,
pages 3-20.

\bibitem[LQW]{lqw}  Li, Wei-Ping, Qin, Zhenbo, and Wang, Weiqiang:
{\em Vertex algebras and the cohomology ring structure
of Hilbert schemes of points on surfaces.\/}
 Math. Ann.  324  (2002),  no. 1, 105-133.

\bibitem[Lie]{lieberman} Lieberman, D. I. : {\em Numerical and homological
equivalence of algebraic cycles on Hodge manifolds.\/}
Amer. J. Math. {\bf 90} (1968), 366-374.

\bibitem[Ma1]{markman-generators} Markman, E.:
{\it Generators of the cohomology ring of moduli spaces of
sheaves on symplectic surfaces.\/}
Journal f\"{u}r die reine und angewandte Mathematik 544 (2002), 61-82

\bibitem[Ma2]{markman-integral-generators} Markman, E.:
{\em Integral generators for the cohomology ring of moduli spaces of
sheaves over Poisson surfaces.\/}
Adv. in Math. 208 (2007) 622-646

\bibitem[Ma3]{markman-monodromy-I} Markman, E.:
{\em On the monodromy of moduli spaces of sheaves on
K3 surfaces.\/}
J. Algebraic Geom. {\bf 17}  (2008), 29-99.



\bibitem[Ma4]{markman-2010} Markman, E.:
{\em The Beauville-Bogomolov class as a characteristic class.\/}
Preprint, May 2010, http://www.math.umass.edu/$\sim$markman/


\bibitem[Mu1]{mukai-hodge} Mukai, S.:
{\em On the moduli space of bundles on K3 surfaces I},
Vector bundles on algebraic varieties,
Proc. Bombay Conference, 1984, Tata Institute of Fundamental Research Studies,
no. 11, Oxford University Press, 1987, pp. 341-413.

\bibitem[Mu2]{mukai-duality} Mukai, S.:
{\em Duality between $D(X)$ and $D(\hat X)$ with its application to Picard
sheaves.\/}
Nagoya Math. J.  81  (1981), 153--175.


\bibitem[V]{kaledin-verbitski-book} Verbitsky, M.:
{\em Hyperholomorphic sheaves and new examples of hyperkaehler manifolds,\/}
alg-geom/9712012.
In the book: {\em Hyperk\"{a}hler manifolds,\/} by Kaledin, D. and Verbitsky, M.,
Mathematical Physics (Somerville), 12. International Press,
Somerville, MA, 1999.

\bibitem[Y]{yoshioka-abelian-surface} Yoshioka, K.:
{\em
Moduli spaces of stable sheaves on abelian surfaces. \/
}
Math. Ann. 321 (2001), no. 4, 817--884.



\end{thebibliography}
\end{document}